\DeclareMathOperator{\Ima}{Im}
\DeclareMathOperator{\Rank}{rank}
\DeclareMathOperator{\Span}{span}
\DeclareMathOperator{\Sign}{sgn}
\DeclareMathOperator{\Alt}{Alt}
\theoremstyle{plain}
\newtheorem{theorem}{Theorem}[section]
\newtheorem{proposition}[theorem]{Proposition}
\newtheorem{lemma}[theorem]{Lemma}
\newtheorem{corollary}[theorem]{Corollary}
\theoremstyle{remark}
\newtheorem{remark}[theorem]{Remark}
\theoremstyle{definition}
\newtheorem{problem}[theorem]{Problem}
\newtheorem{definition}[theorem]{Definition}
\begin{document}
\title[The Cauchy problem for rank-one submanifolds]{The geometric Cauchy problem \\ for rank-one submanifolds}

\author{Matteo Raffaelli}
\address{Department of Applied Mathematics and Computer Science\\
	Technical University of Denmark\\
	Matematiktorvet, Building 303B\\
	2800 Kongens Lyngby\\
	Denmark}
\curraddr{School of Mathematics, Georgia Institute of Technology, Atlanta, Georgia 30332}
\email{raffaelli@math.gatech.edu}

%\curraddr{Institute of Discrete Mathematics and Geometry\\
%	TU Wien\\
%	Wiedner Hauptstra{\ss}e 8-10/104\\
%	1040 Vienna\\
%	Austria}
%\email{matteo.raffaelli@tuwien.ac.at}

\date{September 8, 2024}
\subjclass[2020]{Primary: 53A07; Secondary: 53B20, 53B25}
%\thanks{}
\begin{abstract}
Given a smooth distribution $\mathscr{D}$ of $m$-dimensional planes along a smooth regular curve $\gamma$ in $\mathbb{R}^{m+n}$, we consider the following problem: to find an $m$-dimensional rank-one submanifold of $\mathbb{R}^{m+n}$, that is, an $(m-1)$-ruled submanifold with constant tangent space along the rulings, such that its tangent bundle along $\gamma$ coincides with $\mathscr{D}$. In particular, we give sufficient conditions for the local well-posedness of the problem, together with a parametric description of the solution.
\end{abstract}
\maketitle

\tableofcontents

\section{Introduction and main result} \label{Introduction&MainResult}

Given a smooth $(m+n)$-manifold $Q^{m+n}$ and some class $\mathcal{A}^{m}$ of $m$-dimensional embedded submanifolds of $Q^{m+n}$, we can formulate the \textit{geometric Cauchy problem} for the class $\mathcal{A}^{m}$ as follows. 

\begin{problem} \label{IMR-PR1}
Let $\gamma \colon I \to Q^{m+n}$ be a smooth regular embedded curve in $Q^{m+n}$, and let $\mathscr{D}$ denote a smooth distribution of rank $m$ along $\gamma$ such that $\dot{\gamma}(t) \in \mathscr{D}_{t}$ for all $t \in I$. Find all members of $\mathcal{A}^{m}$ containing $\gamma$ and whose tangent bundle along $\gamma$ is precisely $\mathscr{D}$.
\end{problem}

\begin{remark}
In case $Q^{m+n}$ is a Riemannian manifold, let $\mathscr{D}^{\perp}_{t}$ be the orthogonal complement of $\mathscr{D}_{t}$ in the tangent space $T_{\gamma(t)}Q^{m+n}$. Then Problem \ref{IMR-PR1} is equivalent to finding all members of $\mathcal{A}^{m}$ containing $\gamma$ and whose \emph{normal} bundle along $\gamma$ coincides with the \textit{orthogonal distribution} $\mathscr{D}^{\perp} = \bigcup_{t} \mathscr{D}^{\perp}_{t}$.
\end{remark}

This problem has its roots in the classical Bj\"{o}rling problem for minimal surfaces in $\mathbb{R}^{3}$ and has recently been examined for several combinations of $Q^{2+n}$ and $\mathcal{A}^{2}$, generally with $n=1$; see e.g.\ \cite{brander2018, cintra2016, martinez2015, brander2013, aledo2009}.

In particular, in a joint work with Irina Markina \cite{markina2019}, the author has studied the case of developable, i.e., flat, hypersurfaces in $\mathbb{R}^{m+1}$ (the case $m=2$ is well-known \cite[p.~195--197]{docarmo1976}). We showed that, so long as the normal curvature of $\gamma$ is never vanishing, a solution exists, is locally unique, and may be constructed using a method alternative to the classical Gauss parametrization \cite{dajczer1985}; see Appendix \ref{AltConstr}.

The purpose of this article is twofold. On one hand, we aim to give a new and simpler proof of the main theorem in \cite{markina2019}. At the same time, we intend to generalize such result to the whole class of rank-one submanifolds of $\mathbb{R}^{m+n}$. These are precisely the ruled submanifolds without planar points and whose induced metric is flat (Theorem \ref{DS-TH1.4}).

In order to state our main theorem, set $Q^{m+n}=\mathbb{R}^{m+n}$. Let $\pi^{\top}$ and $\pi^{\perp}$ denote the orthogonal projections onto $\mathscr{D}$ and $\mathscr{D}^{\perp}$, respectively. Let $\overline{D}_{t}$ be the Euclidean covariant derivative along $\gamma$.

\begin{theorem} \label{IMR-TH3}
Assume that the function $\pi^{\perp}(\overline{D}_{t}\dot{\gamma})$ is never zero. The geometric Cauchy problem for rank-one submanifolds of $\mathbb{R}^{m+n}$ has a solution if and only if the linear map $\rho_{t} \colon \mathscr{D}^{\perp}_{t} \to \mathscr{D}_{t}$ defined by $\nu \mapsto \pi^{\top}(\overline{D}_{t}\nu)$ has rank one for every $t \in I$. Moreover:
\begin{enumerate}[font=\upshape,label=(\roman*)]
\item \label{item1} The solution is unique in the following sense: if $M_{1}$ and $M_{2}$ are two solutions of the Cauchy problem, then they coincide on an open set containing $\gamma(I)$.
\item \label{item2} In such neighborhood, the unique solution $M$ satisfies $M = \gamma + \mathscr{D} \cap (\Ima \rho)^{\perp}$.
\item \label{item3} When $I$ is closed, $M$ can be parametrized as follows: Let $(E_{1}, \dotsc, E_{m})$ be a smooth orthonormal frame for $\mathscr{D}$ satisfying $E_{1}=\dot{\gamma}$. Choose a smooth section $N$ of $\mathscr{D}^{\perp}$ such that $\pi^{\top}(\overline{D}_{t} N)$ is nonvanishing. For any $j = 1, \dotsc, m-1$, let 
\begin{equation*}
X_{j}=\left(\overline{D}_{t}E_{j+1} \cdot N \right) E_{1} - \left(\overline{D}_{t}E_{1} \cdot N \right) E_{j+1}.
\end{equation*}
For sufficiently small $\varepsilon >0$, let $\sigma \colon I \times (-\varepsilon, \varepsilon)^{m-1} \to \mathbb{R}^{m+n}$ be defined by
\begin{equation*}%\label{IMR-EQ1}
\sigma(t,u^{1},\dotsc,u^{m-1})=\gamma(t) + u^{1}X_{1}(t) + \dotsb + u^{m-1}X_{m-1}(t).
\end{equation*}
Then, $M = \Ima\sigma$.
\end{enumerate}
\end{theorem}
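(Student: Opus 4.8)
The plan is to reduce everything to one pointwise identity. For a section $Y$ of $\mathscr{D}$ and a section $\nu$ of $\mathscr{D}^{\perp}$, differentiating $Y\cdot\nu\equiv 0$ along $\gamma$ and splitting into the $\mathscr{D}$- and $\mathscr{D}^{\perp}$-parts gives
\[
\pi^{\perp}(\overline{D}_{t}Y)\cdot\nu \,=\, -\,Y\cdot\pi^{\top}(\overline{D}_{t}\nu)\qquad (t\in I).
\]
Thus $\pi^{\perp}(\overline{D}_{t}Y)(t_{0})=0$ precisely when $Y(t_{0})\perp\Ima\rho_{t_{0}}$; and, taking $Y=\dot{\gamma}$, the hypothesis $\pi^{\perp}(\overline{D}_{t}\dot{\gamma})\neq 0$ forces, for every $t$, both $\Rank\rho_{t}\geq 1$ and $\dot{\gamma}(t)\notin(\Ima\rho_{t})^{\perp}$. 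Everything else is extracting, from the definition of ``developable'', the fact that a ruling direction $Y$ at $\gamma(t)$ must satisfy $\pi^{\perp}(\overline{D}_{t}Y)=0$, and then reading off the conclusions from the identity.

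For \emph{necessity} and items \ref{item1}, \ref{item2}, let $M$ be a solution and fix $t_{0}$. The ruling of $M$ through $\gamma(t_{0})$ is an affine $(m-1)$-plane with direction $V_{t_{0}}\subseteq\mathscr{D}_{t_{0}}$, along which $TM$ — hence the Gauss map — is constant; therefore the second fundamental form of $M$ annihilates $V_{t_{0}}$, while its value on the pair $(\dot{\gamma}(t_{0}),\dot{\gamma}(t_{0}))$ is $\pi^{\perp}(\overline{D}_{t}\dot{\gamma})(t_{0})\neq 0$. So $\dot{\gamma}(t_{0})\notin V_{t_{0}}$: the curve $\gamma$ is transverse to the ruling foliation, and near $\gamma(t_{0})$ one can write $M$ as $\sigma(t,u)=\gamma(t)+\sum_{j}u^{j}Y_{j}(t)$ with $(Y_{j})$ a smooth frame for the ruling directions along $\gamma$. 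Constancy of $TM$ along rulings means $T_{\sigma(t,u)}M=\mathscr{D}_{t}$ for all $u$; since this space is spanned by the $Y_{j}(t)$ and by $\dot{\gamma}(t)+\sum_{j}u^{j}\overline{D}_{t}Y_{j}(t)$, all of which lie in $\mathscr{D}_{t}$ once $\pi^{\perp}(\overline{D}_{t}Y_{j})\equiv 0$, the condition is exactly $\pi^{\perp}(\overline{D}_{t}Y_{j})\equiv 0$. By the identity, $Y_{j}(t)\in\mathscr{D}_{t}\cap(\Ima\rho_{t})^{\perp}$; these are $m-1$ independent vectors, so $\dim(\mathscr{D}_{t}\cap(\Ima\rho_{t})^{\perp})\geq m-1$, forcing $\Rank\rho_{t}\leq 1$, hence $\Rank\rho_{t}=1$ and $V_{t}=\mathscr{D}_{t}\cap(\Ima\rho_{t})^{\perp}$. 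Thus $M$ agrees with $\gamma+\mathscr{D}\cap(\Ima\rho)^{\perp}$ on a neighborhood of $\gamma(I)$, giving \ref{item1} and \ref{item2} at once.

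For \emph{sufficiency} and \ref{item3}, assume $\Rank\rho_{t}\equiv 1$ and pick $N$, $(E_{i})$ as stated. A short computation with the identity (applied to $E_{1}$ and $E_{j+1}$ against $N$) shows $X_{j}(t)\cdot\pi^{\top}(\overline{D}_{t}N)=0$, so, since $\Ima\rho_{t}=\Span\{\pi^{\top}(\overline{D}_{t}N)\}$, each $X_{j}(t)$ lies in $\mathscr{D}_{t}\cap(\Ima\rho_{t})^{\perp}$ and satisfies $\pi^{\perp}(\overline{D}_{t}X_{j})\equiv 0$; moreover $\dot{\gamma}\cdot\pi^{\top}(\overline{D}_{t}N)\neq 0$ by the identity and the hypothesis, which makes the $X_{j}$ independent, hence a frame for $\mathscr{D}\cap(\Ima\rho)^{\perp}$. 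For $\sigma(t,u)=\gamma(t)+\sum_{j}u^{j}X_{j}(t)$ we then get $\partial_{t}\sigma=\dot{\gamma}+\sum_{j}u^{j}\overline{D}_{t}X_{j}\in\mathscr{D}_{t}$, lying outside $\Span\{X_{j}(t)\}$ at $u=0$ because $\dot{\gamma}(t)\notin(\Ima\rho_{t})^{\perp}$; so for $\varepsilon$ small $\sigma$ is an embedding (globally when $I$ is closed, after shrinking $\varepsilon$ uniformly) whose image is ruled with tangent space $\mathscr{D}_{t}$ constant along the slices $\{t\}\times(-\varepsilon,\varepsilon)^{m-1}$ and free of planar points near $\gamma$ (there $\pi^{\perp}(\overline{D}_{t}\dot{\gamma})\neq 0$). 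Hence $\Ima\sigma$ is a solution, and by \ref{item1}, \ref{item2} it is \emph{the} solution and equals $\gamma+\mathscr{D}\cap(\Ima\rho)^{\perp}$, proving \ref{item3}.

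The step I expect to be the main obstacle is the transversality claim — that $\gamma$ meets the ruling foliation of any solution transversally — together with its corollary that ``constant tangent space along the rulings'' is equivalent, in a ruling-adapted parametrization, to the differential condition $\pi^{\perp}(\overline{D}_{t}Y_{j})\equiv 0$; this is what lets the identity do all the remaining work. Checking that the map $\sigma$ of \ref{item3} is a genuine embedding onto an embedded developable submanifold, rather than merely an immersion, is routine but requires the usual shrinking of $\varepsilon$ (and, when $I$ is closed, a compactness argument).
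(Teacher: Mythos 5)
Your argument is correct, but it is not the route taken in the paper's main proof (Section 4). There, a candidate ruling plane is identified with a point of the Grassmannian $G(m-1,\mathscr{D}_{t})$ via the chart $U_{2\dotsm m-1}$, and the system $x_{j}^{i}\tau_{i}^{k}=0$ is solved explicitly in those coordinates. You instead work coordinate-free: the adjoint identity $\pi^{\perp}(\overline{D}_{t}Y)\cdot\nu=-Y\cdot\rho_{t}(\nu)$ shows that the ruling directions of any solution must lie in $\mathscr{D}\cap(\Ima\rho)^{\perp}$, a space of dimension $m-\Rank\rho_{t}$, while $\pi^{\perp}(\overline{D}_{t}\dot{\gamma})\neq 0$ forces $\Rank\rho_{t}\geq 1$ and keeps $\dot{\gamma}$ out of that space; everything then follows from the dimension count. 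This is essentially the argument the paper relegates to Appendix \ref{AltProof}. Your write-up actually does more than either version on the necessity/uniqueness side: you prove that $\gamma$ is transverse to the ruling foliation of an \emph{arbitrary} solution $M$ (via $\alpha(\dot{\gamma},\dot{\gamma})=\pi^{\perp}(\overline{D}_{t}\dot{\gamma})\neq 0$), so that $M$ is locally a ruled graph over $\gamma$ --- a step the paper leaves implicit. What the Grassmannian computation buys in exchange is the explicit formula for the $X_{j}$ in part (iii) without invoking $\Ima\rho_{t}=\Span\{\pi^{\top}(\overline{D}_{t}N)\}$, though your direct verification that $X_{j}\cdot\pi^{\top}(\overline{D}_{t}N)=0$ and that $\overline{D}_{t}E_{1}\cdot N\neq 0$ yields independence is equally valid. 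The one glossed point is that freedom from planar points must hold on all of $\Ima\sigma$, not only along $\gamma$; this needs the usual continuity-plus-compactness shrinking of $\varepsilon$, which you mention only in passing (the paper glosses it as well).
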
  

\begin{remark}
In the definition of $\rho_{t}$, the vector $\nu$ is understood to be extended arbitrarily to a smooth local section $N$ of $\mathscr{D}^{\perp}$. Well-definedness of $\rho_{t}$ is easily seen, for, denoting $E_{i}(t)$ by $e_{i}$,
\begin{align*}
\pi^{\top} (\overline{D}_{t}N)(t) &=\left(\overline{D}_{t}N(t) \cdot e_{1}\right)e_{1} + \dotsb +\left(\overline{D}_{t}N(t) \cdot e_{m} \right)e_{m}\\
&= -\left(\nu \cdot \overline{D}_{t}E_{1}(t)\right)e_{1} - \dotsb -\left(\nu \cdot \overline{D}_{t}E_{m}(t) \right)e_{m}.
\end{align*}
\end{remark}

\begin{remark}
If $I$ is not closed, then $M$ can be parametrized in essentially the same way as presented in \ref{item3}, by simply allowing the domain of $\sigma$ to be a subset of $\mathbb{R}^{m}$ of the form $\bigcup_{t\in I} \{t\}\times (-\varepsilon(t), \varepsilon(t))^{m-1}$. Clearly, if $\pi^{\perp}(\overline{D}_{t}\dot{\gamma})$ is nonzero in the closure of $I$, then one can still choose $\varepsilon$ to be constant.
\end{remark}

\begin{remark} \label{IMR-RM4}
The existence condition may be equivalently stated as follows: the function $\pi^{\perp}(\overline{D}_{t}\dot{\gamma})$ is never zero and there exists a smooth orthonormal frame $(N_{1}^{\ast},\dotsc,N_{n}^{\ast})$ for $\mathscr{D}^{\perp}$ such that $\pi^{\top}(\overline{D}_{t}N^{\ast}_{k})=0$ for all but one value of $k\in \{1,\dotsc,n\}$.
\end{remark}

Note that, when $n=1$, the nonvanishing of $\pi^{\perp}(\overline{D}_{t}\dot{\gamma})(t)$ becomes a sufficient condition for the rank of $\rho_{t}$ to be one, and we thus retrieve Theorem 1.1 in \cite{markina2019}.

The paper is organized as follows. In section \ref{Preliminaries} we review some background material. In section \ref{DevelopabilityCondition} we derive a simple criterion for discerning when a parametrized ruled submanifold is developable. Such criterion, extending a well-known result of Yano \cite{yano1944}, is of independent interest. In section \ref{ProofMainTh}, using an approach based on Grassmannians, we prove Theorem \ref{IMR-TH3}. In section \ref{CodimensionReduction} we present a sufficient condition for the solution to be a hypersurface in substantial codimension. In section \ref{Application} we apply our main result to the problem of approximating---locally along a curve---a given submanifold by a single-rank one. In section \ref{Description} we show that every rank-one submanifold can be described, in a neighborhood of a point, by $m+n-1$ smooth functions. Finally, in section \ref{DevDistr} we prove that any full curve can be extended to a full rank-one submanifold. There follow two appendixes: The first indicates a different method for proving Theorem \ref{IMR-TH3}. The second reviews a simpler parametrization, available in the case where $n=1$.

%\addtocontents{toc}{\SkipTocEntry}
\subsection*{Notation}

Throughout the paper the integers $i,j,k$ satisfy $i \in \{1, \dotsc, m\}$, $j \in \{1, \dotsc, m-1\}$, and $k \in \{1, \dotsc, n\}$, where $m \geq 2$ and $n \geq 1$. Note that we always use Einstein summation convention.%We write $M^{m}$ whenever we wish to emphasize that a manifold $M$ has dimension $m$.

\section{Preliminaries} \label{Preliminaries}

This section contains some preliminaries needed for the proof of Theorem \ref{IMR-TH3}.

\subsection{The wedge product}
Let $V$ be an $d$-dimensional real vector space, and let $V^{\ast}$ be its dual space. A \textit{tensor of type} $(l,r)$ on $V$ is a multilinear map
\begin{equation*}
F \colon \left(V^{\ast}\right)^{l} \times V^{r} \to \mathbb{R}.
\end{equation*}
The set of all such tensors---which is of course a vector space under pointwise addition and scalar multiplication---we denote by $T^{(l,r)}(V)$.

Recall that a multilinear map is called \textit{alternating} if its value changes sign whenever two arguments are interchanged. In particular, an alternating tensor of type $(0,r)$ is called an $r$-\textit{covector} on $V$, whereas one of type $(l,0)$ is an $l$-\textit{vector} on $V$. As usual, the sets of all $l$-vectors is denoted by $\Lambda^{l}(V)$, and we let $\Lambda(V) = \Lambda^{1}(V) \oplus \dotsb \oplus \Lambda^{d}(V)$. 

Given $\lambda \in \Lambda^{r}(V)$ and $\theta \in \Lambda^{l}(V)$, we define the \textit{wedge product} $\lambda \wedge \theta$ to be the following $(r+l)$-vector:
\begin{equation*}
\lambda \wedge \theta = \frac{(r+l)!}{r!\,l!} \Alt(\lambda \otimes \theta),
\end{equation*}
where $\Alt$ denotes alternation \cite[p.~351]{lee2013} and $\otimes$ is the ordinary tensor product. Being bilinear, the wedge product turns the vector space $\Lambda(V)$ into an (associative, anticommutative graded) algebra, called the \textit{exterior algebra} of $V$.

Given $v_{1},\dotsc,v_{l}\in V$ and $\eta^{1},\dotsc, \eta^{l}\in V^{\ast}$, one can check that
\begin{equation*}
v_{1} \wedge \dotsb \wedge v_{l} (\eta^{1}, \dotsc, \eta^{l}) = \det (\eta^{\alpha}(v_{\beta})) .
\end{equation*}
Moreover, we have the following lemma.
\begin{lemma}[{\cite[Problem~14-4]{lee2013}}]
An $l$-tuple $(v_{1}, \dotsc, v_{l})$ of elements of $V$ is linearly dependent if and only if $v_{1} \wedge \dotsb \wedge v_{l}=0$. Moreover, two $l$-tuples $(v_{1}, \dotsc, v_{l})$ and $(w_{1},\dotsc,w_{l})$ have the same span if and only if there exists a nonzero real number $\lambda$ such that
\begin{equation*}
v_{1} \wedge \dotsb \wedge v_{l} = \lambda \, w_{1} \wedge \dotsb \wedge w_{l}.
\end{equation*}
\end{lemma}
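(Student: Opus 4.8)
The plan is to deduce both assertions from multilinearity, the alternating property, and the determinant formula $v_{1} \wedge \dotsb \wedge v_{l}(\eta^{1},\dotsc,\eta^{l}) = \det(\eta^{\alpha}(v_{\beta}))$ recorded above.

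For the first assertion I would argue each implication separately. If $(v_{1},\dotsc,v_{l})$ is linearly dependent, then after relabelling we may write $v_{l} = \sum_{\beta<l} c^{\beta} v_{\beta}$; expanding $v_{1}\wedge\dotsb\wedge v_{l}$ by multilinearity in the last entry yields a sum of wedge products each containing a repeated factor, and every such term vanishes by antisymmetry, so $v_{1}\wedge\dotsb\wedge v_{l}=0$. Conversely, if $(v_{1},\dotsc,v_{l})$ is linearly independent, I would extend it to a basis $(v_{1},\dotsc,v_{d})$ of $V$, take the dual basis $(\eta^{1},\dotsc,\eta^{d})$, and evaluate: the determinant formula gives $v_{1}\wedge\dotsb\wedge v_{l}(\eta^{1},\dotsc,\eta^{l})=\det(\delta^{\alpha}_{\beta})=1\neq 0$, so the $l$-vector is nonzero.

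For the second assertion I would first reduce to the case where both tuples are linearly independent: if either is dependent then, by the first part, both sides of the wedge identity vanish, and the stated equivalence carries no content in that regime (proportionality of two zero $l$-vectors says nothing about spans), so I would record that caveat and assume independence henceforth. Assuming this, suppose the tuples span a common subspace $W$, necessarily $l$-dimensional; then each is a basis of $W$ and $v_{\beta}=A^{\alpha}_{\beta}w_{\alpha}$ for a unique invertible $A=(A^{\alpha}_{\beta})$. Substituting into $v_{1}\wedge\dotsb\wedge v_{l}$ and expanding multilinearly, antisymmetry kills all terms with a repeated column index and reorders the surviving ones with a sign, leaving precisely $\sum_{\tau}\Sign(\tau)\prod_{\beta}A^{\tau(\beta)}_{\beta}=\det A$ times $w_{1}\wedge\dotsb\wedge w_{l}$; thus $\lambda=\det A\neq 0$. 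For the reverse implication, given $v_{1}\wedge\dotsb\wedge v_{l}=\lambda\, w_{1}\wedge\dotsb\wedge w_{l}$ with $\lambda\neq 0$, I would observe that for each $v\in V$ the first part of the lemma (applied to the extended tuples) yields the chain $v\in\Span(v_{1},\dotsc,v_{l})\iff (v,v_{1},\dotsc,v_{l})\text{ dependent}\iff v\wedge v_{1}\wedge\dotsb\wedge v_{l}=0\iff v\wedge w_{1}\wedge\dotsb\wedge w_{l}=0\iff v\in\Span(w_{1},\dotsc,w_{l})$, where the middle step uses $\lambda\neq 0$; hence the two spans agree.

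None of this is deep; the step demanding the most care is the change-of-basis expansion in the forward direction of the second assertion — tracking the permutation signs so as to recognise the Leibniz formula for $\det A$ — together with, on a more conceptual level, noticing that the asserted equivalence genuinely fails for linearly dependent tuples, so that the intended scope of the lemma (frames of distributions, which are independent by construction) must be kept in mind.
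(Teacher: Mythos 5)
The paper offers no proof of this lemma --- it is quoted verbatim from Lee with a citation --- so there is no in-paper argument to compare yours against. Your proof is correct and is the standard one: the forward direction of the first assertion by expanding a dependency relation and killing repeated factors, the converse by extending to a basis and evaluating against the dual basis via the determinant formula, the forward direction of the second assertion by the change-of-basis expansion giving $\lambda=\det A$, and the converse by testing membership through $v\wedge v_{1}\wedge\dotsb\wedge v_{l}=0 \iff v\wedge w_{1}\wedge\dotsb\wedge w_{l}=0$. You are also right to flag that the second assertion needs linear independence as a hypothesis (Lee's statement includes it; the paper's transcription omits it), and that this is harmless here since the tuples the paper feeds into the lemma are frames. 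One small slip in your caveat paragraph: if only \emph{one} of the two tuples is dependent, then only the corresponding side of the wedge identity vanishes, not both; but in that mixed case the equivalence still holds trivially, since the spans have different dimensions and the proportionality with $\lambda\neq 0$ fails. The genuine counterexamples are confined to the case where \emph{both} tuples are dependent, which is the case you correctly exclude before running the main argument.
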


\subsection{\for{toc}{Grassmannians}\except{toc}{Grassmannians \cite[p.~82--83]{tu2011}}}
%\subsection[Grassmannians]{Grassmannians \cite[p.~82--83]{tu2011}}
The \textit{Grassmannian} $G(l,V)$ is the set of all $l$-dimensional linear subspaces of $V$. Once a basis of $V$ has been chosen, we may identify $G(l,V)$ with the quotient $\mathcal{A}^{l\times d}/{\sim}$, where $\mathcal{A}^{l\times d}$ denotes the set of real $l \times d$ matrices of rank $l$,
\[
\mathcal{A}^{l\times d} = \{  A \in \mathbb{R}^{l \times d} \mid \Rank A = l  \} ,
\]
and $\sim$ the equivalence relation
\[ 
A \sim B \: \Leftrightarrow \: \text{there is a matrix $g \in \mathrm{GL}(l,\mathbb{R})$ such that $B = gA$.}
\]
Note that $A \sim B$ if and only if $A$ and $B$ have the same row space. %Thus, $G(k,V) \cong \mathcal{A}^{k\times n}/{\sim}$.

One may show that $\mathcal{A}^{l\times d}/{\sim}$, with the quotient topology, is a compact topological manifold of dimension $l(d-l)$. In fact, it has a natural smooth structure, as we now explain.%we may construct a $C^{\infty}$ atlas $\{(U_{J}, \phi_{J})\}$ for $\mathcal{A}^{k\times n}/{\sim}$. 

Let $\pi$ be the canonical projection $\mathcal{A}^{l\times d} \to \mathcal{A}^{l\times d}/{\sim}$, and let $J$ be any strictly ascending multi-index $1 \leq i_{1} < \dotsb < i_{l} \leq d$ of length $l$. For $A \in \mathcal{A}^{l\times d}$, denote by $A_{J}$ the $l\times l$ submatrix of $A$ consisting of the $i_{1}\text{th}, \dotsc, i_{l}\text{th}$ columns of $A$. Then define
\[
V_{J} = \{A \in \mathcal{A}^{l\times d} \mid \det A_{J} \neq 0\}
\]
and $\tilde{\phi}_{J} \colon V_{J}\to \mathbb{R}^{l \times(d-l)}$,
\[
\tilde{\phi}_{J}(A)=(A^{-1}_{J}A)_{J'},
\]
where $(\;)_{J'}$ denotes the $l \times(d-l)$ submatrix obtained from the complement $J'$ of the multi-index $J$. Finally, let $U_{J} = V_{J}/{\sim}$, and $\phi_{J}$ such that $\tilde{\phi}_{J} = \phi_{J}\circ \pi$. It is standard to prove that $\{(U_{J}, \phi_{J})\}$ is a smooth atlas for $\mathcal{A}^{l\times d}/{\sim}$.

\subsection{Distributions along curves}

Let $\gamma$ be a smooth regular embedded curve $I \to \mathbb{R}^{m+n}$. Without loss of generality, we may assume $\gamma$ to be unit-speed. Recall that the \textit{ambient tangent bundle} $T\mathbb{R}^{m+n}\lvert_{\gamma}$ {over} $\gamma$ is the smooth vector bundle over $I$ defined as the disjoint union of the tangent spaces of $\mathbb{R}^{m+n}$ at all points of $\gamma(I)$: 
\begin{equation*}
T\mathbb{R}^{m+n}\lvert_{\gamma} \:= \bigsqcup_{t \,\in \, I} T_{\gamma(t)}\mathbb{R}^{m+n}.
\end{equation*}
We define a \textit{distribution of rank $m$ along} $\gamma$ to be a smooth rank-$m$ subbundle of the ambient tangent bundle over $\gamma$.

Let $\mathscr{D}$ be a distribution of rank $m$ along $\gamma$ such that $\dot{\gamma}(t) \in \mathscr{D}_{t}$ for all $t \in I$. The standard Euclidean metric $\overline{g}$ on $\mathbb{R}^{m+n}$ allows us to decompose $T\mathbb{R}^{m+n}\lvert_{\gamma}$ into the orthogonal direct sum of $\mathscr{D}$ and its normal bundle $\mathscr{D}^{\perp}$; indeed, letting $\mathscr{D}^{\perp}_{t}$ denote the orthogonal complement of $\mathscr{D}_{t} \subset T_{\gamma(t)}\mathbb{R}^{m+n}$ with respect to $\overline{g}$, define $\mathscr{D}^{\perp} = \bigcup_{t}\mathscr{D}^{\perp}_{t}$, and so
\begin{equation} \label{DAC-EQ1}
T\mathbb{R}^{m+n}\lvert_{\gamma} \:= \mathscr{D} \oplus \mathscr{D}^{\perp}.
\end{equation}

In this setting, if $v$ is an element of $T\mathbb{R}^{m+n}\lvert_{\gamma}$, then the \textit{tangential projection} is the map $\pi^{\top}\colon T\mathbb{R}^{m+n}\lvert_{\gamma} \: \to \mathscr{D}$ defined by
\begin{equation*}
v \mapsto \pi^{\top}(v),
\end{equation*}
where $\pi^{\top}(v)$ is the orthogonal projection of $v$ onto $\mathscr{D}$. Likewise, denoting by $\pi^{\perp}(v)$ the orthogonal projection of $v$ onto $\mathscr{D}^{\perp}$, the \textit{normal projection} $\pi^{\perp}$ is the map $T\mathbb{R}^{m+n}\lvert_{\gamma}\: \to \mathscr{D}^{\perp}$ defined by
\begin{equation*}
v \mapsto \pi^{\perp}(v).
\end{equation*}

Let now $(E_{1}, \dotsc, E_{m})$ be a smooth $\gamma$-adapted orthonormal frame for $\mathscr{D}$: this is just an $m$-tuple of smooth vector fields along $\gamma$ such that $E_{1} = \dot{\gamma}$ and such that $(E_{i}(t))_{i=1}^{m}$ is an orthonormal basis of $\mathscr{D}_{t}$ for all $t$. Similarly, let $(N_{1}, \dotsc, N_{n})$ be an orthonormal frame for $\mathscr{D}^{\perp}$. It follows that the $(m+n)$-tuple $(E_{1},\dotsc,E_{m},N_{1},\dotsc,N_{n})$ is an orthonormal frame along $\gamma$ that respects the direct sum decomposition \eqref{DAC-EQ1}.

Hence, denoting by $\overline{D}_{t}$ the Euclidean covariant derivative along $\gamma$, i.e., the covariant derivative along $\gamma$ determined by the Levi-Civita connection of $(\mathbb{R}^{m+n},\overline{g})$, we may write:
\begin{equation} \label{DAC-EQ2}
\overline{D}_{t}E_{i} = \pi^{\top}(\overline{D}_{t}E_{i}) + \tau_{i}^{1}N_{1} + \dotsb + \tau_{i}^{n}N_{n}.
\end{equation}
Here $\tau_{i}^{1} ,\dotsc,\tau_{i}^{n}$ are smooth functions $I \to \mathbb{R}$. In particular, indicating $\overline{g}$ by a dot, $\tau_{i}^{k} = \overline{D}_{t}E_{i} \cdot N_{k}$.

\section{The developability condition} \label{DevelopabilityCondition}

In this section we aim to generalize a well-known result about ruled surfaces in $\mathbb{R}^{2+n}$.

\begin{lemma}[\cite{yano1944}] \label{DS-LM1}
Let $I$, $J$ be intervals. Further, let $\gamma$ and $X$ be curves $I \to \mathbb{R}^{2+n}$ such that the map $\sigma \colon I \times J \to \mathbb{R}^{2+n}$ given by
\begin{equation*}
\sigma(t,u) = \gamma(t)+ u X(t)
\end{equation*}
is a smooth embedding. Then the tangent space of $\sigma$ is constant along each ruling precisely when $\gamma$ and $X$ satisfy $\dot{\gamma} \wedge \dot{X} \wedge X = 0$.
\end{lemma}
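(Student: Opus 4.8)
The plan is to reduce the statement to pure linear algebra by writing down the tangent planes of $\Ima\sigma$ explicitly. Differentiating $\sigma$ gives $\partial_t\sigma(t,u)=\dot\gamma(t)+u\dot X(t)$ and $\partial_u\sigma(t,u)=X(t)$, so — identifying each tangent space of $\mathbb{R}^{2+n}$ with $\mathbb{R}^{2+n}$ itself — the tangent space of $\Ima\sigma$ at the point $\sigma(t,u)$ is the $2$-plane $T_{(t,u)}=\Span\bigl(\dot\gamma(t)+u\dot X(t),\,X(t)\bigr)$. Before anything else I would isolate the only consequence of the embedding hypothesis that gets used: since $d\sigma$ is injective everywhere, in particular at $u=0$, the vectors $\dot\gamma(t)$ and $X(t)$ are linearly independent for every $t\in I$, and $T_{(t,u)}$ is genuinely two-dimensional for all $(t,u)$.

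Next I would fix $t$ and establish the equivalence: the map $u\mapsto T_{(t,u)}$ is constant if and only if $\dot X(t)\in\Span\bigl(\dot\gamma(t),X(t)\bigr)$. The forward implication is immediate — compare $T_{(t,0)}$ with $T_{(t,u)}$ for some $u\neq 0$ and subtract, to get $\dot X(t)\in T_{(t,0)}$. For the converse, if $\dot X(t)=a\,\dot\gamma(t)+b\,X(t)$, then $\dot\gamma(t)+u\dot X(t)=(1+ua)\dot\gamma(t)+ub\,X(t)\in\Span\bigl(\dot\gamma(t),X(t)\bigr)=T_{(t,0)}$, hence $T_{(t,u)}\subseteq T_{(t,0)}$, and the inclusion is an equality because both sides are $2$-dimensional (this is where the immersion hypothesis enters). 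Finally, since $\dot\gamma(t)$ and $X(t)$ are already linearly independent, the condition $\dot X(t)\in\Span\bigl(\dot\gamma(t),X(t)\bigr)$ holds exactly when the triple $\bigl(\dot\gamma(t),\dot X(t),X(t)\bigr)$ is linearly dependent, which by the wedge-product lemma recalled above (\cite[Exercise~14-4]{lee2013}) is equivalent to $\dot\gamma(t)\wedge\dot X(t)\wedge X(t)=0$. As $t\in I$ was arbitrary, this is precisely the claim.

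I do not anticipate any serious obstacle. The only step that deserves a moment's care is the second one, where the immersion hypothesis must be invoked to promote the inclusion $T_{(t,u)}\subseteq T_{(t,0)}$ to an equality of planes — without it the implication would be false (for instance if $X(t)$ and $\dot\gamma(t)$ were parallel). Everything else amounts to the short computation of $\partial_t\sigma,\partial_u\sigma$ together with one application of the wedge-product lemma.
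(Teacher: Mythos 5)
Your argument is correct and is essentially the route the paper takes: the paper does not prove this cited lemma directly but establishes the general version (Lemma \ref{DS-LM6}) by the same computation of $\partial_t\sigma=\dot\gamma+u^{j}\overline{D}_{t}X_{j}$ and $\partial_u\sigma=X_{j}$, expressing constancy of the tangent space as proportionality of the wedge products $Z(t,0)$ and $Z(t,u)$ rather than, as you do, as equality of spans promoted from an inclusion by a dimension count. The two formulations are interchangeable via the span-versus-wedge lemma you both invoke, and your handling of where the immersion hypothesis enters matches the paper's implicit use of it.
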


To begin with, we shall extend the classical notion of ruled surface to arbitrary dimension and codimension.
\begin{definition} \label{DS-DEF2}
An $m$-dimensional embedded submanifold $M^{m}$ of $\mathbb{R}^{m+n}$ is a \textit{ruled} submanifold if
\begin{enumerate}
\item \label{cond1} $M$ is free of planar points, that is, there exists no point of $M$ where the second fundamental form vanishes;
\item there exists a \textit{ruled structure on} $M$, that is, a foliation of $M$ by open subsets of $(m-1)$-dimensional affine subspaces of $\mathbb{R}^{m+n}$, called \textit{rulings}. 
\end{enumerate}
%In particular, a ruled submanifold is said to be \textit{torsal} if for every pair of points $(p,q)$ on the same ruling we have $T_{p}M = T_{q}M$, i.e., all tangent spaces of $M$ along any fixed ruling can be canonically identified with the same linear subspace of $\mathbb{R}^{m+n}$.
\end{definition}
%\begin{remark}
%Although condition \ref{cond1} in Definition \ref{DH-DEF2} may seem overly restrictive, it gives any ruled surface $H$ a desirable property. Namely, it ensures the existence of a \emph{smooth} ruled parametrization of $H$ \cite{ushakov1996}.
%\end{remark}

Following \cite{ushakov1999}, we now define rank-one submanifolds and give three alternative characterizations of them.

\begin{definition} \label{DS-DEF1.3}
The \textit{relative nullity index} of $M^{m}$ at a point $p$ is the dimension of the \textit{nullity space} $\Delta$ of $M$ at $p$, which is the kernel of the second fundamental form $\alpha$ of $M$ at $p$:
\begin{equation*}
\Delta= \{ x \in T_{p}M \mid \alpha(x,\cdot)=0 \}.
\end{equation*}
We say that $M$ is a \textit{rank-one} (or \textit{single-rank}) submanifold if, for all $p \in M$, the relative nullity index is equal to $m-1$.
\end{definition} 

\begin{theorem}[{\cite[Lemma~3.1]{hartman1965}}] \label{DS-TH1.4}
Let $M$ be an $m$-dimensional embedded submanifold of $\mathbb{R}^{m+n}$. Let $\iota \colon M \hookrightarrow \mathbb{R}^{m+n}$ denote inclusion, and let $\overline{g}$ be the standard Euclidean metric on $\mathbb{R}^{m+n}$. The following statements are equivalent:
\begin{enumerate}[font=\upshape]
\item $M$ is rank-one.
\item The Gauss map $M \to G(m, \mathbb{R}^{m+n})$ has rank one everywhere.
\item $M$ is ruled and for every pair of points $(p,q)$ on the same ruling we have $T_{p}M = T_{q}M$, i.e., all tangent spaces of $M$ along any fixed ruling can be canonically identified with the same linear subspace of $\mathbb{R}^{m+n}$.
\item $M$ is ruled and the induced metric on $M$ is flat, that is, the Riemannian manifold $(M, \iota^{\ast}\overline{g})$ is locally isometric to $(\mathbb{R}^{m}, \overline{g})$.
\end{enumerate}
\end{theorem}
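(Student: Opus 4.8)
My plan is to play all three conditions off against the single statement that the relative nullity index $\nu$ of $M$ equals $m-1$ at every point, proving $(1)\Leftrightarrow(2)$ and $(1)\Leftrightarrow(3)$ with this as the common hub. The one genuinely substantial input I would isolate first is the classical structure of the nullity distribution $\Delta$ under constant relative nullity: when $M$ is developable, $\nu$ is constant, so $\Delta$ is a smooth rank-$(m-1)$ distribution, and two applications of the Codazzi equation, exploiting that $\alpha(x,\cdot)=0=\alpha(y,\cdot)$ for $x,y\in\Delta$, show that $[x,y]$ and $\nabla_{x}y$ lie in $\Delta$ whenever $x,y$ do, where $\nabla$ is the induced connection on $M$. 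Hence $\Delta$ is integrable with leaves totally geodesic in $M$; and since the Euclidean connection and $\nabla$ differ by $\alpha$, which vanishes on $\Delta\times\Delta$, those leaves are totally geodesic in $\mathbb{R}^{m+n}$ as well, i.e. open subsets of affine $(m-1)$-planes. Since $\nu=m-1<m$ forces $\alpha\neq 0$ everywhere, $M$ has no planar points, so the leaves of $\Delta$ constitute a ruled structure: condition $(1)$ thus already supplies the "$M$ is ruled" clause of both $(2)$ and $(3)$, with the rulings being precisely the leaves of $\Delta$. (This is the classical core of relative-nullity theory; I would either cite \cite{ushakov1999} or reproduce the short Codazzi computation just sketched.)

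Next I would prove $(1)\Rightarrow(2)$. Let $c$ be a curve inside a single ruling $L$, and write $\overline{D}_{t}$ for covariant differentiation along $c$ in $\mathbb{R}^{m+n}$. Since $c'\in\Delta$, the normal part $\alpha(c',Y)$ of $\overline{D}_{t}Y$ vanishes for every tangent field $Y$ of $M$ along $c$; in particular, for a smooth orthonormal frame $(Y_{1},\dotsc,Y_{m})$ of $TM$ along $c$, every $\overline{D}_{t}Y_{i}$ stays tangent to $M$. Leibniz-expanding $\Psi:=Y_{1}\wedge\dotsb\wedge Y_{m}$ and discarding the terms containing a repeated factor, one obtains $\dot{\Psi}=f\Psi$ for a smooth function $f$, so $\Psi(t)$ is a nowhere-vanishing scalar multiple of $\Psi(0)$, and the wedge-product lemma of Section~\ref{Preliminaries} gives $T_{c(t)}M=T_{c(0)}M$. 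Since rulings are connected, $T_{p}M$ is constant along each ruling. For $(1)\Rightarrow(3)$ I would take at $p\in M$ an orthonormal basis $e_{1},\dotsc,e_{m-1}$ of $\Delta_{p}$ and a unit $e_{m}$ orthogonal to it; then $\alpha(e_{a},e_{b})=0$ unless $a=b=m$, so by the Gauss equation every component $R(e_{a},e_{b},e_{c},e_{d})$ of the curvature tensor $R$ of $(M,\iota^{\ast}\overline{g})$ vanishes unless $a=b=c=d=m$, and that component vanishes as well because $R$ is alternating in its first two arguments. Hence $R\equiv 0$, so the induced metric is flat.

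For the two converses, fix a ruling $L$: an open subset of an affine plane whose direction is an $(m-1)$-dimensional subspace $W$, so that $T_{q}L=W$ for all $q\in L$. Assume first $(3)$. The constant extensions of vectors of $W$ are tangent fields of $M$ along $L$ with vanishing ambient directional derivatives, so $\alpha$ vanishes on $W\times W$; then, applying the Gauss equation to the $2$-plane spanned by $w\in W$ and a vector $z\in T_{q}M$ orthogonal to $W$, and using $\alpha(w,w)=0$ together with flatness, one gets $\alpha(w,z)=0$. Hence $W\subseteq\Delta_{q}$, and since $\dim W=m-1$ while the absence of planar points gives $\nu_{q}\le m-1$, we obtain $\nu_{q}=m-1$ at every point, i.e. $(1)$. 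Assume instead $(2)$, and write $V:=T_{q}M$, which is constant along $L$. For $w\in W\subseteq V$ and $y\in V$, the constant extensions $\tilde{w},\tilde{y}$ are now both tangent to $M$ throughout $L$, so the normal part of the vanishing ambient directional derivative of $\tilde{y}$ in the direction $\tilde{w}$ yields $\alpha(w,y)=0$; hence $W\subseteq\Delta_{q}$, and as before $\nu_{q}=m-1$. In either case $M$ is developable.

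The only step carrying real weight is the preliminary one — integrability of $\Delta$ and the fact that its leaves are totally geodesic in $\mathbb{R}^{m+n}$ — since that is the sole place where the Codazzi equation is genuinely needed. Once it is in hand, the four implications are routine manipulations with the Gauss equation, the elementary wedge-product lemma of Section~\ref{Preliminaries}, and the straightness of the rulings.
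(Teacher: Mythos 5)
Your proof is correct. There is nothing in the paper to compare it against: Theorem \ref{DS-TH1.4} is stated without proof, being quoted as a known characterization following \cite{ushakov1999} and the classical theory of the relative nullity foliation. Your write-up is a sound, self-contained version of that classical argument. The one load-bearing step is exactly the one you isolate: under constant relative nullity $m-1$ the kernel distribution $\Delta$ is smooth and, by the Codazzi equation, autoparallel, hence integrable with leaves totally geodesic in $M$; since $\alpha$ vanishes on $\Delta\times\Delta$, the leaves are totally geodesic in $\mathbb{R}^{m+n}$ and so are open pieces of affine $(m-1)$-planes, while $\dim\Delta=m-1<m$ rules out planar points -- this delivers the ``$M$ is ruled'' clause of both (2) and (3). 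The remaining implications are all handled correctly: the linear ODE $\dot{\Psi}=f\Psi$ for the tangent $m$-vector together with the wedge-product lemma of Section \ref{Preliminaries} gives $(1)\Rightarrow(2)$; the Gauss equation in a basis adapted to $\Delta$ gives $(1)\Rightarrow(3)$; and for the converses, constancy of the tangent space (respectively, flatness combined with the vanishing of $\alpha$ on $W\times W$, which follows from straightness of the ruling) forces the ruling direction $W$ into $\Delta_{q}$, while the no-planar-points clause built into Definition \ref{DS-DEF2} supplies the reverse inequality $\dim\Delta_{q}\leq m-1$. I see no gaps.
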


Note that if $n=1$, then the theorem still holds when the requirement ``$M$ is ruled'' in the fourth statement is replaced by ``$M$ is free of planar points''. In other words, any flat hypersurface without planar points is automatically ruled.

Given a curve $\gamma$ in $\mathbb{R}^{m+n}$, the following result is the key for constructing ruled submanifolds containing $\gamma$. %Note that in its statement we use the canonical isomorphism between $\mathbb{R}^{m+n}$ and any of its tangent spaces to identify the vector fields $X_{1}, \dotsc, X_{m-1}$ along $\gamma$ with curves in $\mathbb{R}^{m+n}$.

\begin{lemma} \label{DS-LM5}
Let $\gamma \colon I \to \mathbb{R}^{m+n}$ be a smooth embedding, with $I$ closed. Let $(X_{1}, \dotsc, X_{m-1})$ be a smooth $(m-1)$-tuple of vector fields along $\gamma$ such that $\dot{\gamma}(t) \wedge X_{1}(t) \wedge \dotsb \wedge X_{m-1}(t) \neq 0$ for all $t \in I$. Then there exists an open box $V$ in $\mathbb{R}^{m-1}$ containing the origin such that the restriction to $I \times V$ of the map $\sigma  \colon I \times \mathbb{R}^{m-1} \to \mathbb{R}^{m+n}$ defined by
\begin{equation*}
\sigma(t, u) = \gamma(t) + u^{j} X_{j}(t)
 \end{equation*}
is a smooth embedding.
\end{lemma}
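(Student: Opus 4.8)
The plan is to verify, for a sufficiently small box $I\times V$, that $\sigma$ is an immersion there, then that it is injective there, and finally to upgrade the resulting injective immersion to a genuine embedding using the compactness of $I$.

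First, at a point $(t,0)$ the Jacobian of $\sigma$ has columns $\partial_{t}\sigma(t,0)=\dot{\gamma}(t)$ and $\partial_{u^{j}}\sigma(t,0)=X_{j}(t)$. By hypothesis $\dot{\gamma}(t)\wedge X_{1}(t)\wedge\dotsb\wedge X_{m-1}(t)\neq 0$, so by the wedge-product criterion recalled in Section~\ref{Preliminaries} the $m$ vectors $\dot{\gamma}(t),X_{1}(t),\dotsc,X_{m-1}(t)$ are linearly independent, i.e. $d\sigma_{(t,0)}$ is injective. The locus of points where $d\sigma$ has maximal rank $m$ is open, being the non-vanishing set of the Gram determinant of the columns of $d\sigma$ (a continuous function of $(t,u)$), and it contains the compact set $I\times\{0\}$; by the tube lemma it therefore contains $I\times V_{1}$ for some open box $V_{1}\ni 0$. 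Hence $\sigma$ is an immersion on $I\times V_{1}$.

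Next I claim $\sigma$ is injective on $I\times B_{\delta}(0)$ for $\delta>0$ small enough. If not, for every $k$ there are distinct points $(s_{k},a_{k}),(t_{k},b_{k})\in I\times B_{1/k}(0)$ with $\sigma(s_{k},a_{k})=\sigma(t_{k},b_{k})$, and necessarily $a_{k},b_{k}\to 0$. Since $I$ is compact, after passing to a subsequence $s_{k}\to s$ and $t_{k}\to t$ in $I$, and taking limits gives $\gamma(s)=\sigma(s,0)=\sigma(t,0)=\gamma(t)$, so $s=t$ because $\gamma$ is injective. But $\sigma$ is an immersion at $(t,0)$, hence a local embedding there, so it is injective on some neighborhood $U$ of $(t,0)$; for $k$ large both offending points lie in $U$, contradicting $(s_{k},a_{k})\neq(t_{k},b_{k})$ and $\sigma(s_{k},a_{k})=\sigma(t_{k},b_{k})$. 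This proves the claim; fix such a $\delta$ and choose an open box $V$ with $\overline{V}\subset V_{1}\cap B_{\delta}(0)$.

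Finally, $\sigma|_{I\times V}$ is an injective immersion, and to conclude it is an embedding I use compactness once more: $I\times\overline{V}$ is compact, so $\sigma|_{I\times\overline{V}}$ is a continuous injection of a compact space into a Hausdorff space, hence a homeomorphism onto its image; restricting this homeomorphism to the open subset $I\times V$ (and using that the subspace topology is transitive) shows that $\sigma|_{I\times V}$ is a homeomorphism onto $\sigma(I\times V)$ with its subspace topology in $\mathbb{R}^{m+n}$, and therefore a smooth embedding, as claimed. I expect the only delicate point to be this last passage together with the injectivity step: an injective immersion from a noncompact domain such as $I\times V$ with $V$ open need not be an embedding, and it is precisely the compactness of the base interval $I$ — transferred through the relatively compact box $\overline{V}$, and combined with the injectivity of $\gamma$ and the local injectivity of immersions — that prevents the domain from accumulating on itself.
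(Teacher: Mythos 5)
Your proof is correct and follows essentially the same strategy as the paper's: shrink the box so that $\sigma$ is an immersion via a tube-lemma argument over the compact interval $I$, then use compactness of $I$ together with the injectivity of $\gamma$ and the local injectivity of immersions to obtain injectivity and the embedding property. Your sequential-compactness argument for injectivity and the final passage through the compact set $I\times\overline{V}$ simply make explicit the steps the paper's proof only sketches.
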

\begin{proof}[Proof \textup{(\cite[proof of Lemma 4.6]{markina2019})}]
To show that $\sigma$ restricts to an embedding, we first prove the existence of an open box $V_{1}$ such that $\sigma \rvert_{I\times V_{1}}$ is a smooth immersion. The statement will then follow by the local slice criterion for embedded submanifolds \cite[Theorem~5.8]{lee2013}.

Clearly, the map $\sigma$ is immersive at $(t,u)$ if and only if the length $\ell \colon I \times \mathbb{R}^{m-1} \to \mathbb{R}$ of the wedge product of the partial derivatives of $\sigma$ is nonzero at $(t,u)$. Thus, let $W_{t}$ be the subset of $\{t\} \times \mathbb{R}^{m-1}$ where $\sigma$ is immersive. It is an open subset in $\mathbb{R}^{m-1}$, because it is the inverse image of an open set under a continuous map, $W_{t} = \ell(t,\cdot)^{-1}(\mathbb{R} \setminus \{0\})$; it contains $0$ by assumption. Hence there exists an $\epsilon_{t} >0$ such that the open ball $B(\epsilon_{t},0) \subset \mathbb{R}^{m-1}$ is completely contained in $W_{t}$. Letting $\epsilon_{1} = \inf_{t \, \in \, I}(\epsilon_{t})$, we can conclude that the restriction of $\sigma$ to the box $I\times (-\epsilon_{1}/2,\epsilon_{1}/2)^{m-1}$ is a smooth immersion.

Now, being $\sigma$ a smooth immersion on $I \times V_{1}$, it follows that every point of $I \times V_{1}$ has a neighborhood on which $\sigma$ is a smooth embedding. Besides, passing to a smaller $\epsilon_{1}$ if needed, we may assume that the image of any such neighborhood is the intersection of an open set in $\mathbb{R}^{m+n}$ with $\sigma(I \times V_{1})$. Hence $\sigma(I \times V_{1})$ satisfies the local slice criterion for embedded submanifolds, as desired.
\end{proof}

\begin{remark}
If $I$ is not closed, then we can still choose an open set $V$ on which $\sigma$ restricts to a smooth embedding. However, we can no longer demand $V$ to be a box, for the infimum $\epsilon_{1}$ might be zero.
\end{remark}

We have thus verified that $\sigma\rvert_{I\times V}$ is an $m$-dimensional embedded submanifold of $\mathbb{R}^{m+n}$ and $\{ \sigma(t,V) \}_{t \, \in \, I}$ a ruled structure on it. Letting
\begin{equation} \label{DS-EQ1}
Z = \frac{\partial \sigma}{\partial t} \wedge \frac{\partial \sigma}{\partial u^{1}} \wedge \dotsb \wedge \frac{\partial \sigma}{\partial u^{m-1}},
\end{equation}
we may express the constancy of the tangent space of $\sigma\rvert_{I\times V}$ along the coordinate vector field $\frac{\partial \sigma}{\partial u^{j}}(t,\cdot)$ as follows: for each value of $u^{j}\neq0$ there exists a (nonzero) real number $\lambda$ such that
\begin{equation*}
\lambda Z(t,0)= Z(t,0,\dotsc,0,u^{j},0,\dotsc,0).
\end{equation*}

The next lemma translates this condition into an equation involving the vector fields $X_{1}, \dotsc, X_{m-1}$ along $\gamma$. As an easy corollary we obtain the desired generalization of Lemma \ref{DS-LM1}.
\begin{lemma} \label{DS-LM6}
%Assume that $\sigma\rvert_{I\times V}$ is a smooth embedding.
The tangent space of $\sigma\rvert_{I\times V}$ is constant along $\frac{\partial \sigma}{\partial u^{j}}$ if and only if
\begin{equation} \label{DS-EQ2}
\overline{D}_{t}X_{j} \wedge \dot{\gamma} \wedge X_{1} \wedge \dotsb \wedge X_{m-1} = 0 .
\end{equation}
\end{lemma}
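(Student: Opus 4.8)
The plan is to reduce everything to two applications of the wedge product lemma of Section~\ref{Preliminaries}: one to identify the tangent spaces of $\sigma\rvert_{I\times V}$ with spans of the $m$-vector $Z$ of \eqref{DS-EQ1}, and one to turn a collinearity statement about $m$-vectors into the linear-dependence statement \eqref{DS-EQ2}. First I would restrict to the coordinate curve $s \mapsto \sigma(t,0,\dotsc,0,s,0,\dotsc,0)$, with $s$ in the $j$th slot. Along it $\sigma = \gamma(t) + sX_{j}(t)$, so, recalling that in $\mathbb{R}^{m+n}$ the operator $\overline{D}_{t}$ is just ordinary differentiation of vector-valued functions, the partial derivatives of $\sigma$ at the relevant point are $\dot\gamma(t)+s\,\overline{D}_{t}X_{j}(t)$ and $X_{1}(t),\dotsc,X_{m-1}(t)$; wedging them gives
\begin{equation*}
Z(t,0,\dotsc,0,s,0,\dotsc,0) = Z(t,0) + s\,W\,, \qquad W := \overline{D}_{t}X_{j}\wedge X_{1}\wedge\dotsb\wedge X_{m-1}\,,
\end{equation*}
an affine curve of $m$-vectors with constant velocity $W$. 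Since $\sigma\rvert_{I\times V}$ is an embedding, hence an immersion, $Z$ never vanishes, so by the wedge product lemma $\Span\{Z(t,0,\dotsc,0,s,0,\dotsc,0)\}$ is exactly the tangent space of $\sigma\rvert_{I\times V}$ at the corresponding point, and $\dot\gamma(t),X_{1}(t),\dotsc,X_{m-1}(t)$ are linearly independent.

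Next I would unwind the constancy condition stated just before the lemma --- that for every $u^{j}\neq 0$ there is a nonzero $\lambda$ with $Z(t,0,\dotsc,0,u^{j},0,\dotsc,0) = \lambda Z(t,0)$ --- into the purely algebraic assertion that $W$ is a scalar multiple of $Z(t,0)$. In one direction, $u^{j}W = (\lambda-1)Z(t,0)$ with $u^{j}\neq 0$ immediately gives $W = cZ(t,0)$ for some $c\in\mathbb{R}$. In the other, if $W = cZ(t,0)$ then $Z(t,0,\dotsc,0,u^{j},0,\dotsc,0) = (1+cu^{j})Z(t,0)$, and $1+cu^{j}\neq 0$ precisely because $Z$ never vanishes, so $\lambda = 1+cu^{j}$ does the job. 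Thus the lemma is reduced to: $W = cZ(t,0)$ for some $c$ if and only if \eqref{DS-EQ2} holds.

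This last equivalence is again the wedge product lemma. If $W = cZ(t,0)$, then $\left(\overline{D}_{t}X_{j}-c\dot\gamma\right)\wedge X_{1}\wedge\dotsb\wedge X_{m-1} = 0$, so $\overline{D}_{t}X_{j}-c\dot\gamma \in \Span\{X_{1},\dotsc,X_{m-1}\}$; substituting $\overline{D}_{t}X_{j} = c\dot\gamma + \sum_{l=1}^{m-1} b^{l}X_{l}$ into $\overline{D}_{t}X_{j}\wedge\dot\gamma\wedge X_{1}\wedge\dotsb\wedge X_{m-1}$ and using $\dot\gamma\wedge\dot\gamma = 0$ together with the vanishing of any wedge containing a repeated $X_{l}$ yields \eqref{DS-EQ2}. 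Conversely, \eqref{DS-EQ2} says that $\overline{D}_{t}X_{j},\dot\gamma,X_{1},\dotsc,X_{m-1}$ are linearly dependent; since $\dot\gamma,X_{1},\dotsc,X_{m-1}$ are independent by the first paragraph, $\overline{D}_{t}X_{j}$ must lie in their span, and writing out that linear combination and wedging with $X_{1}\wedge\dotsb\wedge X_{m-1}$ gives $W = cZ(t,0)$ with $c$ the $\dot\gamma$-coefficient, by the same cancellations.

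The computations here are all routine. The only points requiring a little care are the reformulation in the second paragraph --- in particular producing a \emph{nonzero} multiplier $\lambda$, which is exactly the step where immersivity of $\sigma\rvert_{I\times V}$ (equivalently, the non-vanishing of $Z$) is used --- and keeping track of signs when reordering factors of the wedge products; I do not expect a genuine obstacle beyond this bookkeeping.
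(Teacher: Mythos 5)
Your proof is correct and follows essentially the same route as the paper's: compute $Z$ along the coordinate curve, reduce the constancy of the tangent space to a proportionality of $m$-vectors, and convert that into the linear-dependence statement \eqref{DS-EQ2} via the wedge product lemma. If anything you are slightly more careful than the printed proof, in explicitly checking that the multiplier $\lambda=1+cu^{j}$ is nonzero and that the independence of $\dot\gamma,X_{1},\dotsc,X_{m-1}$ justifies solving for $\overline{D}_{t}X_{j}$ in the converse direction.
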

\begin{proof}
Computing the partial derivatives of $\sigma$ and substituting them into the expression \eqref{DS-EQ1} of $Z$, we obtain
\begin{equation*}
Z(t,u)= \bigl\{ \dot{\gamma}(t) + u^{j} \overline{D}_{t}X_{j}(t)\bigr\} \wedge X_{1}(t) \wedge \dotsb \wedge X_{m-1}(t).
\end{equation*}
Hence, we need to show that
\begin{equation} \label{DS-EQ3}
\overline{D}_{t}X_{j}(t) \wedge \dot{\gamma}(t) \wedge X_{1}(t) \wedge \dotsb \wedge X_{m-1}(t) = 0 
\end{equation}
if and only if for each $u^{j}\neq 0$ there exists $\lambda$ such that
\begin{equation*}
(\lambda-1) \dot{\gamma}(t) \wedge X_{1}(t) \wedge \dotsb \wedge X_{m-1}(t) = u^{j} \overline{D}_{t}X_{j}(t) \wedge X_{1}(t) \wedge \dotsb \wedge X_{m-1}(t).
\end{equation*}

First, assume that for some $u^{j}\neq 0$ such a $\lambda$ exists. If $\lambda=1$, then the $m$-vector on the right hand side is necessarily zero. Else, if $\lambda \neq 1$, then the tuples $(\dot{\gamma}(t),X_{1}(t),\dotsc,X_{m-1}(t))$ and $(\overline{D}_{t}X_{j}(t), X_{1}(t),\dotsc,X_{m-1}(t))$ have the same span. Either way, it is clear that \eqref{DS-EQ3} holds.

Conversely, if the vectors $\overline{D}_{t}X_{j}(t), \dot{\gamma}(t),X_{1}(t),\dotsc,X_{m-1}$ are linearly dependent, then there exist real numbers $a_{1},\dotsc,a_{m}$ such that
\begin{equation*}
\overline{D}_{t}X_{j}(t) = a_{1}X_{1}(t)+\dotsb+a_{m-1}X_{m-1}(t)+a_{m}\dot{\gamma}(t).
\end{equation*}
It follows that
\begin{equation*}
u^{j} \overline{D}_{t}X_{j}(t) \wedge X_{1}(t) \wedge \dotsb \wedge X_{m-1}(t) = u^{j}a_{m}\dot{\gamma}(t)\wedge X_{1}(t) \wedge \dotsb \wedge X_{m-1}(t),
\end{equation*}
and so for any $u^{j}\neq 0$ the desired $\lambda$ satisfies $\lambda -1=u^{j}a_{m}$.
\end{proof}

\begin{corollary} \label{DS-COR7}
%Assume that $\sigma\rvert_{I\times V}$ is a smooth embedding. Then 
$\sigma\rvert_{I\times V}$ is rank-one if and only if it is ruled (i.e., without planar points) and the following $m-1$ equations are fulfilled:
\begin{align*}
\overline{D}_{t}X_{1} \wedge \dot{\gamma} \wedge X_{1} \wedge \dotsb \wedge X_{m-1} &= 0 ,\\
&\mathrel{\makebox[\widthof{=}]{\vdots}} \\
\overline{D}_{t}X_{m-1} \wedge \dot{\gamma} \wedge X_{1} \wedge \dotsb \wedge X_{m-1} &= 0.
\end{align*}
\end{corollary}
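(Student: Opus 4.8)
The plan is to combine the characterization of developability in Theorem \ref{DS-TH1.4} with the pointwise criterion of Lemma \ref{DS-LM6}. First I would recall, as noted in the discussion preceding Lemma \ref{DS-LM6}, that for the chosen box $V$ the map $\sigma\rvert_{I\times V}$ is an $m$-dimensional embedded submanifold carrying the ruled structure $\{\sigma(t,V)\}_{t\in I}$; hence, by Definition \ref{DS-DEF2}, $\sigma\rvert_{I\times V}$ is ruled exactly when it has no planar points. In view of the equivalence between conditions~(1) and~(2) of Theorem \ref{DS-TH1.4}, it then suffices to prove that, granting that $\sigma\rvert_{I\times V}$ is free of planar points, all tangent spaces of $\sigma\rvert_{I\times V}$ along each fixed ruling $\sigma(t,V)$ coincide if and only if equation \eqref{DS-EQ2} holds for every $j=1,\dotsc,m-1$.

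The forward implication is immediate: if all tangent spaces along a ruling agree, then in particular the tangent space is constant along each coordinate curve $u^{j}\mapsto\sigma(t,0,\dotsc,u^{j},\dotsc,0)$, and applying Lemma \ref{DS-LM6} once for each $j$ yields the desired $m-1$ equations.

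For the converse I would argue directly with the partial derivatives of $\sigma$. Since $\partial\sigma/\partial u^{j}(t,u)=X_{j}(t)$ is independent of $u$ and $\partial\sigma/\partial t(t,u)=\dot\gamma(t)+u^{i}\,\overline{D}_{t}X_{i}(t)$, the tangent space of $\sigma\rvert_{I\times V}$ at $\sigma(t,u)$ equals $\Span\{\dot\gamma(t)+u^{i}\overline{D}_{t}X_{i}(t),X_{1}(t),\dotsc,X_{m-1}(t)\}$. Now \eqref{DS-EQ2} for the index $j$ means, by the wedge-product criterion for linear dependence recalled in Section~\ref{Preliminaries}, that $\overline{D}_{t}X_{j}(t)$ lies in $W_{t}:=\Span\{\dot\gamma(t),X_{1}(t),\dotsc,X_{m-1}(t)\}$; as this holds for all $j$, the vector $\partial\sigma/\partial t(t,u)$ lies in $W_{t}$ for every $u$, and hence so does the whole tangent space at $\sigma(t,u)$. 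But $W_{t}$ is $m$-dimensional, since $\dot\gamma\wedge X_{1}\wedge\dotsb\wedge X_{m-1}\neq0$, and the tangent space at $\sigma(t,u)$ is also $m$-dimensional, since $\sigma\rvert_{I\times V}$ is an immersion. Therefore $T_{\sigma(t,u)}(\sigma\rvert_{I\times V})=W_{t}$, which is visibly independent of $u$, so the tangent spaces coincide along the entire ruling $\sigma(t,V)$; running the two implications through Theorem \ref{DS-TH1.4} then gives the corollary.

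No single step is hard. The only point deserving attention is the passage from ``the tangent space is constant along each of the $m-1$ coordinate directions $\partial\sigma/\partial u^{j}$'' to ``the tangent space is constant along the full affine ruling'', and the dimension count above makes this transparent: along a ruling the tangent plane can vary only through the single vector $\partial\sigma/\partial t$, so the question reduces to the $u$-free linear conditions $\overline{D}_{t}X_{j}(t)\in W_{t}$, $j=1,\dotsc,m-1$, with immersivity of $\sigma\rvert_{I\times V}$ excluding the degenerate case $\partial\sigma/\partial t(t,u)\in\Span\{X_{1}(t),\dotsc,X_{m-1}(t)\}$.
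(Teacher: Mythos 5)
Your proof is correct and follows the route the paper intends: the corollary is obtained by combining the equivalence (1)$\Leftrightarrow$(2) of Theorem \ref{DS-TH1.4} with Lemma \ref{DS-LM6} applied for each $j$. Your explicit dimension-count argument showing that $\overline{D}_{t}X_{j}(t)\in W_{t}$ for all $j$ forces the tangent space to equal $W_{t}$ at \emph{every} point of the ruling (not merely along the coordinate axes through $u=0$) is exactly the small step the paper leaves implicit in calling this an ``easy corollary,'' and you fill it in correctly.
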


As a final result of this section we prove the following proposition, which will be useful in the proof of Theorem \ref{IMR-TH3}.

\begin{proposition}\label{developabilityPROP}
If $X_{j}$ is tangent to $\mathscr{D}$ along $\gamma$---i.e., $X_{j}(t) \in \mathscr{D}_{t}$ for every $t \in I$---then \eqref{DS-EQ2} is equivalent to 
\begin{equation} \label{DS-EQ4}
X_{j}^{i} \tau_{i}^{1} = \dotsb = X_{j}^{i} \tau_{i}^{n} = 0,
\end{equation}
where $X_{j}^{i}$ denotes the $i$th coordinate function of $X_{j}$ with respect to $(E_{1},\dotsc, E_{m})$, and where $\tau_{i}^{1}, \dotsc \tau_{i}^{n}$ are defined by \eqref{DAC-EQ2}.
\end{proposition}
\begin{proof}
Clearly---assuming that $X_{j}$ is tangent to $\mathscr{D}$ along $\gamma$---equation \eqref{DS-EQ2} holds if and only if $\overline{D}_{t}X_{j} \cdot N_{k}=0$ for all $k \in \{1,\dotsc,n\}$.
Differentiating $X_{j} = X_{j}^{i}E_{i}$ and substituting in $\overline{D}_{t}X_{j} \cdot N_{k}=0$, we obtain
\begin{equation*}
X_{j}^{i} \overline{D}_{t}E_{i} \cdot N_{k}= 0 ,
\end{equation*}
for the term $\dot{X}_{j}^{i}E_{i} \cdot N_{k}$ vanishes. Using \eqref{DAC-EQ2}, this is equivalent to 
\begin{equation*}
X_{j}^{i}\left(\tau_{i}^{1}N_{1} + \dotsb + \tau_{i}^{n}N_{n} \right) \cdot N_{k}=0 ,
\end{equation*}
again because $X_{j}^{i} \pi^{\top}(\overline{D}_{t}E_{i}) \cdot N_{k} = 0$.
\end{proof}

\begin{remark}
Theorem \ref{IMR-TH3}\ref{item2} can easily be obtained as a corollary of Lemma \ref{DS-LM6}: As already mentioned in the proof of Proposition \ref{developabilityPROP}, when $X_{j}$ is tangent to $\mathscr{D}$ along $\gamma$, equation \eqref{DS-EQ2} is equivalent to $\overline{D}_{t}X_{j} \cdot N_{k} = 0$ for all $k \in \{1,\dotsc, n\}$. Moreover, since $X_{j}$ and $N_{k}$ are orthogonal, $\overline{D}_{t}X_{j} \cdot N_{k} = 0$ if and only if $\overline{D}_{t}N_{k} \cdot X_{j} = 0$, which in turn is equivalent to $\rho(N_{k})\cdot X_{j} =0$.
\end{remark}

\section{Proof of Theorem \ref{IMR-TH3}} \label{ProofMainTh}

We are now ready to finalize the proof of Theorem \ref{IMR-TH3}. Before treating the general case, let us for now assume $n=1$. To simplify notation, in this section we often write $\tau_{i}$ as a shorthand for $\tau_{i}(t)$.

Let $(x_{1}, \dotsc, x_{m-1})$ be a linearly independent $(m-1)$-tuple of vectors in $\mathscr{D}_{t} = \Span(e_{i})_{i=1}^{m}$, where $e_{i} = E_{i}(t)$. Denoting by $x_{j}^{i}$ the $i$th coordinate of $x_{j}$ with respect to the basis $(e_{i})_{i=1}^{m}$, we may identify the tuple $(x_{1}, \dotsc, x_{m-1})$ with the matrix
\[
X=
\begin{pmatrix}
x_{1}^{1} & \dots & x_{1}^{m} \\
\vdots & \ddots & \vdots \\
x_{m-1}^{1} & \dots & x_{m-1}^{m}
\end{pmatrix} \in \mathcal{A}^{(m-1)\times m}.
\]

According to Lemma \ref{DS-LM5} and Proposition \ref{developabilityPROP}, the problem is to find $[X] =[x_{1}, \dotsc, x_{m-1}] \in \mathcal{A}^{(m-1)\times m}/{\sim}$ such that, for every $j \in \{1,\dotsc,m-1\}$, both the conditions 
\begin{align}
&x_{j}^{i}\tau_{i} = 0, \label{PMR-EQ1} \\
&e_{1} \wedge x_{1} \wedge \dotsb \wedge x_{m-1} \neq 0 \label{PMR-EQ2}
\end{align}
are satisfied. (Since we are assuming $n=1$, we write $\tau_{i}$ for $\tau_{i}^{1}$.)

First, we shall examine \eqref{PMR-EQ2}. It is easy to see that \eqref{PMR-EQ2} corresponds to the requirement that the $(m-1)\times(m-1)$ submatrix $X_{2\dotsm m-1}$ of $X$ obtained by removing the first column of $X$ has full rank; indeed, denoting by $S_{2}^{m}$ the group of permutations $\sigma$ of $(2,\dotsc,m)$,%we have
\begin{align*}
	e_{1} \wedge x_{1} \wedge \dotsb \wedge x_{m-1} &= e_{1}\wedge x_{1}^{i}e_{i} \wedge \dotsb \wedge x_{m-1}^{i}e_{i} \\
	&= e_{1} \wedge x_{1}^{2} e_{2} + \dotsb +x_{1}^{m} e_{m} \wedge \dotsb \wedge x_{m-1}^{2} e_{2} + \dotsb +x_{m-1}^{m} e_{m}\\
	&= \sum_{\sigma \in S_{2}^{m}} \Sign(\sigma) x_{1}^{\sigma(2)} \dotsm x_{m-1}^{\sigma(m)}\,  e_{1}\wedge \dotsb \wedge e_{m}\\
	&= \det X_{2\dotsm m-1} \, e_{1}\wedge \dotsb \wedge e_{m} .
\end{align*}
Hence, we just need to look for $[X] \in U_{2\dotsm m-1}=V_{2\dotsm m-1}/{\sim}$ such that, for every $j$, equation \eqref{PMR-EQ1} holds.

Define a map $\psi_{2\dotsm m-1} \colon \mathbb{R}^{m-1} \to V_{2\dotsm m-1}$ by
\[
z=(z^{1},\dotsc, z^{m-1}) \mapsto 
\begin{pmatrix}
z^{1} & 1 & 0 & \dots & 0\\
z^{2} & 0 & 1 & \dots & 0\\
\vdots & \vdots & \vdots & \ddots & \vdots\\
z^{m-1} & 0 & 0 & \dots & 1
\end{pmatrix} .
\]
Since, by definition, $\phi_{2\dotsm m-1}$ maps $[A] \in U_{2\dotsm m-1}$ to the first column $A_{2\dotsm m-1}^{-1}A_{1}$ of
\begin{equation*}
	A_{2\dotsm m-1}^{-1}A=
	\begin{pmatrix}
		* & 1 & 0 & \dots & 0\\
		* & 0 & 1 & \dots & 0\\
		\vdots & \vdots & \vdots & \ddots & \vdots\\
		* & 0 & 0 & \dots & 1
	\end{pmatrix},
\end{equation*}
it is clear that $\pi \circ\psi_{2\dotsm m-1}=\phi_{2\dotsm m-1}^{-1}$. Being $\phi_{2\dotsm m-1}^{-1}$ a parametrization of $U_{2\dotsm m-1}$, our solution set 
\begin{equation*}
	\left\{ [X] \in U_{2\dotsm m-1} \mid x_{j}^{i} \tau_{i} =0 \text{ for all $j$} \right\}
\end{equation*}
coincides with
\begin{equation*}
	\left\{ [\psi_{2\dotsm m-1}(z)]\mid z \in \mathbb{R}^{m-1} \text{ and } \psi_{2\dotsm m-1}(z)_{j}^{i}\tau_{i}=0 \text{ for all $j$} \right\}.
\end{equation*}
In other words, the original problem in $[X]$ reduces to the uncoupled system of equations $\{\psi_{2\dotsm m-1}(z)_{j}^{i}\tau_{i}=0\}_{j}= \{z^{j}\tau_{1}+\tau_{j+1}=0\}_{j}$ on $\mathbb{R}^{m-1}$.

Assume $\tau_{1} \neq0$. Then $z^{j} = -\tau_{j+1}/\tau_{1}$. Since $[X]=[-\tau_{1}X]$, it follows that the matrix
\begin{equation*}
	\begin{pmatrix}
		\tau_{2} & -\tau_{1} & 0 & \dots & 0\\
		\tau_{3} & 0 & -\tau_{1} & \dots & 0\\
		\vdots & \vdots & \vdots & \ddots & \vdots\\
		\tau_{m} & 0 & 0 & \dots & -\tau_{1}
	\end{pmatrix},
\end{equation*}
or equivalently the tuple $(\tau_{j+1} e_{1} - \tau_{1} e_{j+1})_{j}$, represents the unique solution of our problem.

Let us now consider the case where $n$ is arbitrary. Equation \eqref{PMR-EQ1} turns into the system
\begin{equation*} \label{PMR-EQ3}
x_{j}^{i}\tau_{i}^{1}= \dotsb = x_{j}^{i}\tau_{i}^{n}=0.
\end{equation*}

Assume $\pi^{\perp}(\overline{D}_{t}E_{1})(t) \neq 0$. It follows from \eqref{DAC-EQ2} that there exists $s \in \{1, \dotsc, n\}$ such that $\tau_{1}^{s} \neq 0$. The $s$th system $\{x_{j}^{i}\tau_{i}^{s}=0\}_{j}$ admits therefore the unique solution $(\tau_{j+1}^{s} e_{1} - \tau_{1}^{s} e_{j+1})_{j}$. Clearly, the solution satisfies the $k$th system $\{x_{j}^{i}\tau_{i}^{k}=0 \}_{j}$ if and only if $\{ \tau_{j+1}^{s}\tau_{1}^{k}-\tau_{1}^{s}\tau_{j+1}^{k}=0 \}_{j}$; explicitly,
\begin{equation}\label{kSystemCondition}
	\tau_{2}^{k}= \frac{\tau_{1}^{k}}{\tau_{1}^{s}} \tau_{2}^{s}, \; \dots, \; 
	\tau_{m}^{k} =\frac{\tau_{1}^{k}}{\tau_{1}^{s}} \tau_{m}^{s}.
\end{equation}
Being \eqref{kSystemCondition} equivalent to 
\begin{align*}
	\pi^{\top} (\overline{D}_{t} N_{k})(t) &= -\tau_{1}^{k} e_{1} - \dotsb - \tau_{m}^{k}e_{m}\\
	&=\frac{\tau_{1}^{k}}{\tau_{1}^{s}} \left(-\tau_{1}^{s} e_{1} - \dotsb - \tau_{m}^{s}e_{m} \right)\\
	&=\frac{\tau_{1}^{k}}{\tau_{1}^{s}}\, \pi^{\top} (\overline{D}_{t} N_{s})(t),
\end{align*}
we conclude that $(\tau_{j+1}^{s} e_{1} - \tau_{1}^{s} e_{j+1})_{j}$ solves the remaining $n-1$ systems precisely when the rank of $\rho_{t}$ is one.

\section{Codimension reduction} \label{CodimensionReduction}

In this section we present a sufficient condition for the solution of our geometric Cauchy problem to admit a reduction of its codimension.

Let $M^{m}$ be a submanifold of a Riemannian manifold $\widetilde{M}^{m+n}$. Recall that $M$ is said to be a \textit{full submanifold} if it is not contained in any totally geodesic submanifold $S$ of $\widetilde{M}$ with $\dim S < \dim \widetilde{M}$. If $M$ is not full, then one says that there is a \textit{reduction of the codimension of} $M$ \cite[p.~16]{berndt2016}. 

A key result about codimension reduction was given by Erbacher in 1971.

\begin{theorem}[\cite{erbacher1971}] \label{CR-TH1}
Assume that $\widetilde{M}^{m+n}$ has constant sectional curvature. If the first normal space of $M$ is invariant under parallel translation with respect to the normal connection and has constant dimension $l$, then $M$ is not full. In particular, $M$ is contained in an $(m+l)$-dimensional totally geodesic submanifold of $\widetilde{M}^{m+n}$.
\end{theorem}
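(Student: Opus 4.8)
The plan is to produce, out of the first normal space, an $(m+l)$-dimensional totally geodesic submanifold of $\widetilde{M}^{m+n}$ that contains $M$. Write $\alpha$ for the second fundamental form of $M$, $\nabla$ for the induced connection, $\nabla^{\perp}$ for the normal connection, and $N_{1}M = \Span\{\alpha(X,Y) : X,Y \in TM\}$ for the \emph{first normal bundle}; by hypothesis this is a smooth subbundle of the normal bundle $NM$ of constant rank $l$. I would first settle the flat case $\widetilde{M} = \mathbb{R}^{m+n}$, and then reduce the general constant-curvature case to it through the standard realizations of $\widetilde{M}^{m+n}(c)$ as a totally umbilic hypersurface of an $(m+n+1)$-dimensional Euclidean (for $c>0$) or Lorentz--Minkowski (for $c<0$) space.

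\emph{Flat case.} Let $\overline{D}$ denote the flat connection of $\mathbb{R}^{m+n}$. The key claim is that $E := TM \oplus N_{1}M$ is invariant under $\overline{D}$ in directions tangent to $M$, i.e. $\overline{D}_{X}Z \in \Gamma(E)$ whenever $X \in \Gamma(TM)$ and $Z \in \Gamma(E)$. To see this, split $Z = Z^{\top} + Z^{\nu}$ with $Z^{\top} \in \Gamma(TM)$ and $Z^{\nu} \in \Gamma(N_{1}M)$. The Gauss formula gives $\overline{D}_{X}Z^{\top} = \nabla_{X}Z^{\top} + \alpha(X,Z^{\top})$, and both summands lie in $E$ (the first in $TM$, the second in $N_{1}M$ by definition of the latter). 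The Weingarten formula gives $\overline{D}_{X}Z^{\nu} = -A_{Z^{\nu}}X + \nabla^{\perp}_{X}Z^{\nu}$, where the first summand lies in $TM$ and the second in $N_{1}M$ because $N_{1}M$ is $\nabla^{\perp}$-parallel. This proves the claim, so $E$ is a parallel subbundle along the connected manifold $M$. Since parallel transport in $\mathbb{R}^{m+n}$ along any curve is the identity under the canonical identification $T_{p}\mathbb{R}^{m+n} \cong \mathbb{R}^{m+n}$, the subspace $E_{p}$ equals one and the same $(m+l)$-plane $V_{0} \subseteq \mathbb{R}^{m+n}$ for every $p \in M$. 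Fixing $p_{0} \in M$ and integrating $\dot{c}(t) \in T_{c(t)}M \subseteq V_{0}$ along an arbitrary curve $c$ in $M$ issuing from $p_{0}$, we conclude $M \subseteq p_{0} + V_{0} =: S$, an affine---hence totally geodesic---submanifold of $\mathbb{R}^{m+n}$ of dimension $m+l$.

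\emph{General case.} For $c \neq 0$, view $\widetilde{M}^{m+n}(c)$ as a totally umbilic hypersurface of the appropriate flat space described above, with position vector $\psi$ normal to $\widetilde{M}$ and satisfying $\overline{D}_{X}\psi = X$. Using this identity together with the Gauss and Weingarten formulas for the inclusion $\widetilde{M} \hookrightarrow \mathbb{R}^{m+n+1}$ (resp.\ $\mathbb{R}^{m+n+1}_{1}$), one checks that the first normal bundle of $M$ inside the flat space is precisely $N_{1}M \oplus \mathbb{R}\psi$, that it is parallel with respect to the flat normal connection, and that it has constant rank $l+1$. Applying the flat case in that ambient space places $M$ inside a subspace $\widehat{V}_{0}$ of dimension $m+l+1$; this subspace is \emph{linear}, not merely affine, because $\psi(p_{0}) = p_{0} \in \widehat{V}_{0}$, and therefore $\widehat{V}_{0} \cap \widetilde{M}^{m+n}(c)$ is a totally geodesic submanifold of $\widetilde{M}^{m+n}(c)$ of dimension $m+l$ containing $M$.

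I expect the main obstacle to be the bookkeeping in the general case: correctly identifying the first normal bundle of $M$ in the surrounding flat space and verifying its parallelism, which hinges on cleanly separating the $\psi$-component from the $\widetilde{M}$-tangential and $M$-normal parts of the covariant derivatives involved. Everything else is a direct application of the Gauss and Weingarten formulas and of the triviality of parallel transport in flat space. It is worth flagging two standing assumptions: one needs $l < n$ for the statement ``$M$ is not full'' to be meaningful, and $M$ should be connected for the integration step in the flat case.
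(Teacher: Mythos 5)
The paper itself offers no proof of this statement—it is quoted directly from Erbacher's 1971 paper—so there is nothing in the text to compare your argument against; I therefore judge it on its own. Your proof is correct and is essentially the classical codimension-reduction argument. The flat case is clean: by the Gauss and Weingarten formulas together with the parallelism hypothesis, $TM \oplus N_{1}M$ is invariant under ambient differentiation in tangent directions, hence (parallel transport in $\mathbb{R}^{m+n}$ being trivial) a fixed subspace $V_{0}$, and integrating $\dot{c}$ places the connected $M$ inside the affine $(m+l)$-plane $p_{0}+V_{0}$. One inaccuracy in the general case: the first normal bundle of $M$ in the surrounding flat space need not be \emph{precisely} $N_{1}M \oplus \mathbb{R}\psi$ of rank $l+1$. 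It drops to rank $l$ exactly when some $\xi \in N_{1}M$ has shape operator proportional to the identity with the umbilicity factor of $\widetilde{M} \hookrightarrow \mathbb{R}^{m+n+1}$; for instance, for a small sphere $M = S^{m}(r) \subset S^{m+n}(1)$ the flat-ambient second fundamental form is $\langle X, Y\rangle$ times a single fixed normal vector, so its span is one-dimensional, and applying your flat case to that bundle would yield an affine $(m+1)$-plane not through the origin, whose intersection with the sphere is $M$ itself rather than a totally geodesic $S^{m+1}$. This does not break your proof, because your flat-case argument never uses that the parallel bundle equals the span of the second fundamental form—only that it contains it and is preserved by the normal connection, both of which hold for $N_{1}M \oplus \mathbb{R}\psi$. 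But you should either state the flat case for an arbitrary parallel normal subbundle containing the first normal space, or replace ``is precisely'' by ``is contained in.'' Your two standing caveats ($M$ connected for the integration step, and $l < n$ for ``not full'' to have content) are apt.
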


Let $N_{p}M$ be the normal space of $M$ at $p$. Recall that the \textit{first normal space} of $M$ at $p$ is the linear subspace $N_{p}^{1}M$ of $N_{p}M$ spanned by the image of the second fundamental form at $p$. In other words, $N_{p}^{1}M$ is the orthogonal complement in $N_{p}M$ of the kernel of the linear map $N_{p}M \to \mathrm{End}(T_{p}M)$, $\nu \mapsto A_{\nu}$, where $A_{\nu}$ denotes the shape operator of $M$ in direction $\nu$. If the dimension of $N_{p}^{1}M$ is constant on $M$, then $N^{1}M$ is a smooth subbundle of the normal bundle of $M$.

Since $\Ima \alpha = \{\alpha(x,y) \mid x,y \in T_{p}M \ominus \Delta\}$, it is clear that the dimension of the first normal space at any point of a rank-one submanifold is one. Hence, using Theorem \ref{CR-TH1} (see also Remark \ref{IMR-RM4}), we obtain the following proposition.

\begin{proposition}
Assume that the function $\pi^{\perp}(\overline{D}_{t}\dot{\gamma})$ is never zero and there exists a smooth orthonormal frame $(N^{\ast}_{1},\dotsc,N^{\ast}_{n})$ for $\mathscr{D}^{\perp}$ such that $\pi^{\top}(\overline{D}_{t}N^{\ast}_{k})=0$ for all but one value $s$ of $k\in \{1,\dotsc,n\}$. If $\pi^{\perp}(\overline{D}_{t}N^{\ast}_{s})=0$, then the solution of the associated geometric Cauchy problem for rank-one submanifolds of $\mathbb{R}^{m+n}$ is not full. In particular, it is a hypersurface in an $(m+1)$-dimensional affine subspace of $\mathbb{R}^{m+n}$.
\end{proposition}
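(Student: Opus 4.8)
One can deduce the statement from Corollary \ref{CR-COR2} and Erbacher's theorem (Theorem \ref{CR-TH1}) once one knows that the line bundle $N^{1}M$ is parallel for the normal connection. I will instead give a direct argument that produces the ambient affine subspace explicitly; this is cleaner and bypasses the codimension-reduction machinery.

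I would start by recording what the hypotheses buy us. Since $\pi^{\top}(\overline{D}_{t}N^{\ast}_{k}) = \rho_{t}(N^{\ast}_{k})$, the existence condition of Theorem \ref{IMR-TH3} holds: $\rho_{t}$ has rank $1$ and $\Ima\rho_{t} = \mathbb{R}\,\rho_{t}(N^{\ast}_{s})$, so there is a unique developable solution $M$ in a neighborhood of $\gamma(I)$, and $M = \gamma + \mathscr{D}\cap(\Ima\rho)^{\perp}$ by Theorem \ref{IMR-TH3}\ref{item2}. Differentiating $E_{i}\cdot N^{\ast}_{k}\equiv 0$ and using $E_{i}\in\mathscr{D}$, one finds $\tau^{k}_{i} := \overline{D}_{t}E_{i}\cdot N^{\ast}_{k} = -E_{i}\cdot\pi^{\top}(\overline{D}_{t}N^{\ast}_{k}) = -E_{i}\cdot\rho_{t}(N^{\ast}_{k})$, which vanishes for every $i$ whenever $k\neq s$. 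Rewriting \eqref{DAC-EQ2} in the frame $(N^{\ast}_{1},\dotsc,N^{\ast}_{n})$ thus gives $\overline{D}_{t}E_{i} = \pi^{\top}(\overline{D}_{t}E_{i}) + \tau^{s}_{i}\,N^{\ast}_{s}$ for all $i$.

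The crux is then to consider the rank-$(m+1)$ subbundle $\mathscr{E}_{t} := \mathscr{D}_{t}\oplus\mathbb{R}\,N^{\ast}_{s}(t)$ of $T\mathbb{R}^{m+n}\lvert_{\gamma}$ and to show it is $\overline{D}_{t}$-invariant. It suffices to test this on the orthonormal frame $(E_{1},\dotsc,E_{m},N^{\ast}_{s})$ of $\mathscr{E}$: we have $\overline{D}_{t}E_{i}\in\mathscr{E}_{t}$ by the previous paragraph, and $\overline{D}_{t}N^{\ast}_{s} = \pi^{\top}(\overline{D}_{t}N^{\ast}_{s})\in\mathscr{D}_{t}\subseteq\mathscr{E}_{t}$ precisely because of the extra hypothesis $\pi^{\perp}(\overline{D}_{t}N^{\ast}_{s})=0$. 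Next I would invoke the elementary fact that a parallel plane bundle along a curve in Euclidean space is constant: as $\overline{D}_{t}$ is compatible with $\overline{g}$, the complement $\mathscr{E}^{\perp}$ is parallel too, so the orthogonal projector $P(t)$ onto $\mathscr{E}_{t}$ satisfies $(I-P)\dot{P}=0$ and $P\dot{P}=0$, hence $\dot{P}\equiv 0$; thus $\mathscr{E}_{t}\equiv\mathscr{E}$ for a fixed $(m+1)$-dimensional subspace $\mathscr{E}\subset\mathbb{R}^{m+n}$. Fixing $t_{0}\in I$, the relation $\dot{\gamma}(t) = E_{1}(t)\in\mathscr{E}$ gives $\gamma(t)-\gamma(t_{0}) = \int_{t_{0}}^{t}\dot{\gamma}\in\mathscr{E}$, while every ruling satisfies $\gamma(t) + \mathscr{D}_{t}\cap(\Ima\rho_{t})^{\perp}\subset\gamma(t)+\mathscr{D}_{t}\subset\gamma(t_{0})+\mathscr{E}$; together with $M = \gamma+\mathscr{D}\cap(\Ima\rho)^{\perp}$ this yields $M\subset\gamma(t_{0})+\mathscr{E}$. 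Since $\gamma(t_{0})+\mathscr{E}$ is an $(m+1)$-dimensional totally geodesic affine subspace, $M$ is not full, and as $\dim M = m$ it is a hypersurface in $\gamma(t_{0})+\mathscr{E}$.

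The only point requiring any care is the implication ``parallel plane bundle $\Rightarrow$ constant subspace''; the rest is bookkeeping with \eqref{DAC-EQ2} and the definition of $\rho_{t}$. If one instead wanted to follow the Erbacher route, the obstacle would shift to proving $N^{1}M = \mathbb{R}\,N^{\ast}_{s}$ on all of $M$ rather than just along $\gamma$, which rests on the same ingredients — namely $\pi^{\perp}(\overline{D}_{t}W)\in\mathbb{R}\,N^{\ast}_{s}$ for every $\mathscr{D}$-tangent field $W$ along $\gamma$, plus $\pi^{\perp}(\overline{D}_{t}N^{\ast}_{s})=0$.
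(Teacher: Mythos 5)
Your argument is correct, and it takes a genuinely different route from the paper's. The paper disposes of this proposition in one line, by combining Corollary \ref{CR-COR2} (the first normal space of a developable submanifold is one-dimensional) with Erbacher's reduction theorem (Theorem \ref{CR-TH1}); the hypothesis $\pi^{\perp}(\overline{D}_{t}N^{\ast}_{s})=0$ is precisely what makes the line bundle $N^{1}M=\mathbb{R}\,N^{\ast}_{s}$ parallel in the normal connection, and --- as you yourself note at the end --- one must still propagate this condition from $\gamma$ to all of $M$ before Erbacher applies, a step the paper leaves implicit. Your proof instead exhibits the ambient totally geodesic subspace explicitly: the computation $\tau^{k}_{i}=-E_{i}\cdot\rho_{t}(N^{\ast}_{k})=0$ for $k\neq s$ shows that $\mathscr{E}=\mathscr{D}\oplus\mathbb{R}\,N^{\ast}_{s}$ is $\overline{D}_{t}$-invariant, the extra hypothesis handles $\overline{D}_{t}N^{\ast}_{s}$, and the standard fact that a parallel subbundle along a curve in $\mathbb{R}^{m+n}$ is a constant subspace (your projector identities are correct: $(I-P)\dot{P}=0$ is parallelism of $\mathscr{E}$ tested on sections $PW$ with $W$ constant, and $P\dot{P}=0$ is that of $\mathscr{E}^{\perp}$) then traps $\gamma$, hence every ruling, hence $M=\gamma+\mathscr{D}\cap(\Ima\rho)^{\perp}$ from Theorem \ref{IMR-TH3}\,\ref{item2}, inside the fixed affine $(m+1)$-plane $\gamma(t_{0})+\mathscr{E}$. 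What you gain is a self-contained, elementary argument that also identifies the reduced ambient space concretely; what you forgo is the off-the-shelf generality of Erbacher's theorem, which the paper is content merely to cite. Both are valid; yours is arguably the more complete write-up of the two.
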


\section{Application to approximations} \label{Application}

Here we consider the problem of approximating, in a neighborhood of a curve, an arbitrary submanifold by a single-rank one.

Let $M^{m}$ be a submanifold of $\mathbb{R}^{m+n}$, and let $\gamma$ be a smooth regular curve in $M$. We say that a submanifold is a (\textit{first-order}) \textit{approximation of $M$ along $\gamma$} if it contains $\gamma$ and has the same tangent bundle as $M$ along $\gamma$.
%Given a submanifold $M^{m}$ of $\mathbb{R}^{m+n}$ and a smooth regular curve $\gamma$ in $M$, we call any submanifold containing $\gamma$ and having the same tangent bundle as $M$ along $\gamma$ a (\textit{first-order}) \textit{approximation of $M$ along $\gamma$}.

The result below follows easily from our main theorem.

\begin{theorem} \label{APP-TH1}
Let $\alpha$ be the second fundamental form of $M$. Suppose that the curve $\gamma$ is never parallel to an asymptotic direction of $M$, i.e., that $\alpha(\dot{\gamma}, \dot{\gamma})$ never vanishes. Suppose further that the linear map $\alpha_{t} = \alpha(\dot{\gamma}(t), \cdot)$ has rank one for all $t \in I$. Then there exists a rank-one approximation of $M$ along $\gamma$. Such approximation is locally unique, and may be constructed as presented in Theorem~\textup{\ref{IMR-TH3}\ref{item3}}.
\end{theorem}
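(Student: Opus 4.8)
The plan is to reduce everything to Theorem \ref{IMR-TH3}. Take $\mathscr{D}$ to be the tangent bundle $TM$ of $M$ restricted to $\gamma$; this is a smooth rank-$m$ subbundle of $T\mathbb{R}^{m+n}\rvert_{\gamma}$ containing $\dot{\gamma}$, so it is a legitimate Cauchy datum. With this choice, a developable approximation of $M$ along $\gamma$ is, by definition, nothing but a solution of the geometric Cauchy problem for $(\gamma, \mathscr{D})$. Hence it suffices to verify that the two hypotheses of Theorem \ref{IMR-TH3} --- that $\pi^{\perp}(\overline{D}_{t}\dot{\gamma})$ never vanishes, and that $\rho_{t}\colon \mathscr{D}^{\perp}_{t}\to\mathscr{D}_{t}$, $\nu\mapsto\pi^{\top}(\overline{D}_{t}\nu)$, has rank one for all $t$ --- follow from the assumptions on $\alpha$.

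For the first hypothesis, note that along $\gamma$ the distribution $\mathscr{D}^{\perp}$ is the normal bundle of $M$, so $\pi^{\perp}$ is orthogonal projection onto it. The Gauss formula $\overline{D}_{t}\dot{\gamma} = D_{t}\dot{\gamma} + \alpha(\dot{\gamma},\dot{\gamma})$, where $D$ is the Levi-Civita connection of $(M,\iota^{\ast}\overline{g})$ and $D_{t}\dot{\gamma}$ is tangent to $M$, immediately gives $\pi^{\perp}(\overline{D}_{t}\dot{\gamma}) = \alpha(\dot{\gamma},\dot{\gamma})$. So the non-vanishing of $\pi^{\perp}(\overline{D}_{t}\dot{\gamma})$ is precisely the assumption that $\dot{\gamma}$ is nowhere an asymptotic direction of $M$.

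For the second hypothesis, let $\nu$ be a section of $\mathscr{D}^{\perp}$, i.e., a normal field of $M$ along $\gamma$. The Weingarten formula $\overline{D}_{t}\nu = -A_{\nu}(\dot{\gamma}) + \nabla^{\perp}_{t}\nu$ (with $A_{\nu}$ the shape operator and $\nabla^{\perp}$ the normal connection) gives, after tangential projection, $\rho_{t}(\nu) = -A_{\nu}(\dot{\gamma})$. Since $A_{\nu}(\dot{\gamma})\cdot w = \alpha(\dot{\gamma},w)\cdot\nu$ for every $w\in\mathscr{D}_{t}$, the map $-\rho_{t}$ is exactly the adjoint of $\alpha_{t} = \alpha(\dot{\gamma}(t),\cdot)\colon\mathscr{D}_{t}\to\mathscr{D}^{\perp}_{t}$ relative to the inner products induced by $\overline{g}$. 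A linear map and its adjoint have the same rank, so $\Rank\rho_{t} = \Rank\alpha_{t}$, and the rank-one condition on $\rho_{t}$ is equivalent to the one assumed on $\alpha_{t}$.

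Both hypotheses being met, Theorem \ref{IMR-TH3} yields a solution of the Cauchy problem for $(\gamma, TM\rvert_{\gamma})$ that is locally unique and, choosing a $\gamma$-adapted orthonormal frame $(E_{1},\dotsc,E_{m})$ for $\mathscr{D} = TM\rvert_{\gamma}$, is parametrized as in item \ref{item3}; this is the claimed developable approximation. The only step requiring care is the identification of $\rho_{t}$ with minus the adjoint of $\alpha_{t}$ via the Gauss--Weingarten equations --- once that is in place, the equivalence of the hypotheses, and hence the theorem, is immediate.
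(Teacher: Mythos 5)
Your proposal is correct and follows essentially the same route as the paper: both reduce to Theorem \ref{IMR-TH3} by observing that $\rho_{t}$ and $\alpha_{t}$ are (negative) adjoints of one another with respect to the Euclidean inner product, hence have equal rank. Your explicit check via the Gauss formula that $\pi^{\perp}(\overline{D}_{t}\dot{\gamma})=\alpha(\dot{\gamma},\dot{\gamma})$ is a small completeness bonus that the paper leaves implicit, but it does not change the argument.
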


\begin{proof}
With the notations of Theorem \ref{IMR-TH3}, set $\mathscr{D}_{t}=T_{\gamma(t)}M$ and assume that $\alpha_{t} \colon x \mapsto \pi^{\perp}\overline{D}_{t}x$ has rank one. We shall show that $\Rank \rho_{t} =1$.

Note that if $x \in \mathscr{D}_{t}$ and $\nu \in \mathscr{D}_{t}^{\perp}$, then
\begin{equation*}
\rho_{t}(\nu) \cdot x = \overline{D}_{t}\nu \cdot x = - \nu\cdot \overline{D}_{t}x = - \nu \cdot \alpha_{t}(x). 
\end{equation*}
From this we conclude that $\rho_{t}$ and $\alpha_{t}$ are negative adjoint with respect to the dot product, and so they have the same rank.
\end{proof}

\begin{remark}
In the case where $n=1$, the condition that the curve $\gamma$ is never parallel to an asymptotic direction of $M$ becomes sufficient for the existence of a developable approximation of $M$ along $\gamma$. It follows that, on a positively curved hypersurface, such approximation always exists, regardless of the choice of curve. Similarly, in higher codimension, the existence of a rank-one approximation becomes trivial on a positively curved submanifold whose second fundamental form has \emph{rank one in every direction}.%To the best of the author's knowledge, such family of submanifolds has never been object of systematic study.%Perhaps, it could be interesting to look at them more closely.
\end{remark}

\section{A local description of rank-one submanifolds} \label{Description}

So far, we have studied rank-one submanifolds, and yet have not produced any. It is thus natural to ask whether they are plentiful or rare. In this section we shall address this question by proving the following theorem.
\begin{theorem} \label{LD-TH1}
Let $b_{1}, \dotsc, b_{m+n}$ be the standard basis vectors of $\mathbb{R}^{m+n}$. For any $(m+n-1)$-tuple of smooth functions $(f_{1}, \dotsc, f_{m-1}, g_{1}, \dotsc, g_{n})$ on $I$, where $g_{1}, \dotsc, g_{n}$ are nowhere vanishing, the solution of the linear ordinary differential equation problem
\begin{equation} \label{D-EQ1}
\begin{cases}
\overline{D}_{t} E_{1} =  -f_{1}E_{2} - \dotsb -f_{m-1} E_{m} -g_{1} N_{1} - \dotsb -g_{n} N_{n}, \\
\overline{D}_{t} E_{2} = f_{1} E_{1}, \\
\mathrel{\makebox[\widthof{=}]{\vdots}} \\
\overline{D}_{t} E_{m} = f_{m-1} E_{1}, \\
\overline{D}_{t}N_{k} = g_{k} E_{1}, \quad k = 1,\dotsc,n,\\ 
\left(E_{i}(0)\right)_{i=1}^{m} = (b_{1},\dotsc, b_{m}),\\
\left(N_{k}(0)\right)_{k=1}^{n} = (b_{m+1},\dotsc, b_{m+n})
%\left(E_{i}(0),N_{k}(0)\right)_{i,k=1}^{m,n} &= (a_{1},\dotsc, a_{m+n})
\end{cases}
\end{equation}
defines a full rank-one submanifold of $\mathbb{R}^{m+n}$. Any such submanifold can be locally represented in this way.
%Given a smooth unit-speed curve $\gamma \colon [0, \alpha]=I \to \mathbb{R}^{m+n}$ and some $m$-dimensional subspace $\mathscr{D}_{0}$ of $T_{\gamma(0)}\mathbb{R}^{m+n}$, let $\mathcal{M}(\gamma,\mathscr{D}_{0})$ denote the set of full developable submanifolds $M$ containing $\gamma$ and such that $T_{\gamma(0)}M = \mathscr{D}_{0}$. Let $C^{\infty}(I)_{\neq0}$ be the set of non-vanishing smooth functions on $I$. There is a one-to-one correspondence between $\mathcal{M}(\gamma,\mathscr{D}_{0})$ and the set $C^{\infty}(I)^{m-1} \times C^{\infty}(I)_{\neq0}^{n-1}$.
\end{theorem}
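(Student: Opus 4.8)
The plan is to split the statement into two halves: (a) that the ODE-problem produces a developable submanifold, and (b) the converse, that every developable submanifold arises this way. For half (a), I would first observe that the system \eqref{D-EQ1} is a linear first-order ODE with smooth coefficients, so by Picard--Lindelöf there is a unique smooth solution $(E_1,\dotsc,E_m,N_1,\dotsc,N_n)$ on all of $I$ (after possibly shrinking $I$ to a closed subinterval to apply Lemma \ref{DS-LM5}). A short computation using the antisymmetry of the coefficient matrix shows that the inner products $E_i \cdot E_{i'}$, $E_i \cdot N_k$, $N_k \cdot N_{k'}$ are constant along $\gamma$, hence the frame stays orthonormal; moreover $\overline{D}_t E_1 = \dot\gamma$ if we declare $\gamma(t) = \gamma(0) + \int_0^t E_1$, so $E_1 = \dot\gamma$ and $(E_1,\dotsc,E_m)$ is a $\gamma$-adapted orthonormal frame for the distribution $\mathscr{D} = \Span(E_1,\dotsc,E_m)$. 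Reading off \eqref{DAC-EQ2}, the functions $\tau_i^k$ are given by $\tau_1^k = -g_k$ and $\tau_i^k = 0$ for $i \geq 2$; since the $g_k$ are nowhere zero, the linear map $\rho_t$ (equivalently $\nu \mapsto \pi^\top \overline{D}_t \nu$) has rank exactly one, and $\pi^\perp(\overline{D}_t\dot\gamma) = -g_k N_k \neq 0$. By Theorem \ref{IMR-TH3} the geometric Cauchy problem for $(\gamma, \mathscr{D})$ then has a (locally unique) developable solution $M$, which by item \ref{item3} is the image of $\sigma(t,u) = \gamma(t) + u^j X_j(t)$ with $X_j = (\overline{D}_t E_{j+1}\cdot N)E_1 - (\overline{D}_t E_1 \cdot N) E_{j+1}$; with the present frame and $N = N_s$ this is $X_j = g_s E_{j+1}$ (up to the sign/scaling that is irrelevant for the span), so $M$ is precisely the submanifold built from \eqref{D-EQ1}. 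Fullness should follow from the fact that the $N_k$ all appear nontrivially, i.e. the span of $\{E_1,\dotsc,E_m\} \cup \{\overline{D}_t E_1\}$ together with the parallel transport of the normal directions fills $\mathbb{R}^{m+n}$; more carefully, one argues that $M$ is not contained in any proper affine subspace because the first normal space together with the tangent spaces along $\gamma$ spans everything — this is where I would invoke Erbacher's theorem (Theorem \ref{CR-TH1}) in the contrapositive, using that the normal connection does \emph{not} leave any proper subbundle containing $N^1 M$ invariant, precisely because all $g_k \neq 0$ couple each $N_k$ back to $E_1$.

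For the converse, let $M$ be an arbitrary developable submanifold and $p \in M$. Choose a curve $\gamma$ through $p$ transverse to the rulings — concretely, a unit-speed curve with $\dot\gamma(0)$ spanning, together with the ruling direction, the tangent space (for $m \geq 2$ one can take $\gamma$ to be an integral curve of a unit vector field transverse to the relative nullity distribution). Take $\mathscr{D}_t = T_{\gamma(t)}M$. By Theorem \ref{DS-TH1.4} and Corollary \ref{CR-COR2}, the first normal space of $M$ along $\gamma$ is one-dimensional, so in the notation of \eqref{DAC-EQ2} there is a $\gamma$-adapted orthonormal frame in which $\pi^\perp(\overline{D}_t \dot\gamma)$ points in a single normal direction. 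I would now choose the frame adaptively: pick $E_1 = \dot\gamma$; pick $N_s$ to be the unit normal spanning the first normal space (rescaled so $\tau_1^s = -g_s$ with $g_s$ the normal curvature, which is nonzero since $M$ has no planar points and, after a codimension reduction if necessary, we may assume $n$ equals the substantial codimension so that all $g_k \neq 0$); and then extend $E_1, N_s$ to a full orthonormal frame by parallel transport in the \emph{complementary} directions — that is, choose $E_2,\dotsc,E_m$ and the remaining $N_k$ so that $\overline{D}_t E_i \cdot (\text{anything but } E_1) = 0$ and similarly for the $N_k$. The point is that the relative-nullity / flatness structure of a developable submanifold forces exactly this block form on $\overline{D}_t$ restricted to $\mathscr{D}$ and $\mathscr{D}^\perp$: the ruling directions are parallel (they lie in the kernel of $\alpha$ and, by flatness, are parallel for the induced connection), and a normal frame can be chosen parallel off the first normal line. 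Setting $f_j = -\overline{D}_t E_1 \cdot E_{j+1}$ and $g_k = -\overline{D}_t E_1 \cdot N_k$ then realizes the restriction of the frame equations to $\gamma$ as a solution of \eqref{D-EQ1} (after an affine change of coordinates in $\mathbb{R}^{m+n}$ sending the frame at $t=0$ to the standard basis), and by the uniqueness in Theorem \ref{IMR-TH3}\ref{item1} the submanifold generated by this data coincides with $M$ near $p$.

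The main obstacle, and the step I would spend the most care on, is the adaptive choice of frame in the converse: one must verify that a developable submanifold genuinely admits an orthonormal frame along a transverse curve in which $\overline{D}_t$ has the sparse ``spider'' pattern of \eqref{D-EQ1} — i.e. every $E_i$ with $i \geq 2$ and every $N_k$ are parallel except for their $E_1$-component. The existence of a parallel frame for the normal bundle off the first normal line is essentially Erbacher's invariance statement combined with Corollary \ref{CR-COR2}; the existence of the parallel tangential frame $E_2,\dotsc,E_m$ rests on the fact that the relative nullity distribution $\Delta$ is, for a developable submanifold, autoparallel and totally geodesic with flat induced connection — a standard but not entirely trivial fact about constant-nullity submanifolds that I would either cite or prove via Theorem \ref{DS-TH1.4}(3). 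Once this structural lemma is in place, everything else is bookkeeping: matching constants, applying Picard--Lindelöf, and quoting Theorem \ref{IMR-TH3}.
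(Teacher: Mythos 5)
Your converse half follows essentially the same route as the paper (a curve meeting the rulings, adapted parallel frames, reading off the sparse frame equations), and your forward half is actually more detailed than the paper's, which simply asserts that \eqref{D-EQ1} produces a full developable submanifold: you verify that the solution frame stays orthonormal, compute $\tau_{1}^{k}=-g_{k}$ and $\tau_{i}^{k}=0$ for $i\geq 2$, and route the construction through Theorem \ref{IMR-TH3}. However, your fullness argument is a step that genuinely fails. You claim that ``all $g_{k}\neq 0$'' prevents the normal connection from leaving any proper parallel subbundle containing $N^{1}M$ invariant. It does not: take $n=2$ and $g_{2}=-g_{1}$, both nowhere vanishing. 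Then $\overline{D}_{t}(N_{1}+N_{2})=(g_{1}+g_{2})E_{1}=0$, so $N_{1}+N_{2}$ is a constant vector orthogonal to $\mathscr{D}$ along $\gamma$, and the resulting submanifold $\gamma+\Span(E_{2},\dotsc,E_{m})$ lies in a hyperplane of $\mathbb{R}^{m+n}$ --- developable, but not full. What fullness actually needs is that no nonzero constant combination $c^{k}g_{k}$ vanish identically, i.e.\ that $g_{1},\dotsc,g_{n}$ be linearly independent as functions; for $n\geq 2$ this is neither implied by nor equivalent to ``nowhere vanishing.'' (The paper's own proof also asserts this direction without argument, so the hypothesis itself is at issue, but your proposed justification is the step that visibly breaks.)

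In the converse, two further points. You take $\gamma$ merely \emph{transverse} to the rulings, whereas it must be \emph{orthogonal} to them: the equation $\overline{D}_{t}E_{j+1}=f_{j}E_{1}$ requires $\alpha(\dot{\gamma},E_{j+1})=0$, i.e.\ $E_{j+1}\in\Delta$, and since $E_{2},\dotsc,E_{m}$ are by construction the orthogonal complement of $E_{1}=\dot{\gamma}$ inside $TM$, they span $\Delta$ only when $\dot{\gamma}\perp\Delta$. With that correction, the structural machinery you flag as the hard part (autoparallelism of $\Delta$, Erbacher in the contrapositive, a distinguished $N_{s}$) is unnecessary: ordinary parallel transport along the single curve $\gamma$ --- in the normal bundle of $\gamma$ in $M$ for $E_{2},\dotsc,E_{m}$, and in the normal bundle of $M$ for the $N_{k}$ --- already kills all components of $\overline{D}_{t}E_{j+1}$ and $\overline{D}_{t}N_{k}$ except the $E_{1}$-component, once one uses $E_{j+1}\in\Delta$. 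That is exactly the paper's (much shorter) argument, and it needs no fact about constant-nullity distributions beyond the definition of developability.
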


\begin{proof}
Let $M$ be a full rank-one submanifold, and let $\gamma \colon [0,\alpha] = I \to M$ be a smooth unit-speed curve orthogonal to the rulings. Since we are free to move $M$ rigidly as we please, there is no loss of generality in assuming $\dot{\gamma}(0) = b_{1}$ and $T_{\gamma(0)}M = \Span(b_{1},\dotsc,b_{m})$. 

Letting $E_{1} = \dot{\gamma}$, we define an orthonormal frame $(E_{i},N_{k})_{i,k=1}^{m,n}$ for the ambient tangent bundle over $\gamma \colon I \to \mathbb{R}^{m+n}$ as follows: first, we extend $(b_{2},\dotsc,b_{m})$ to an orthonormal parallel frame $(E_{2},\dotsc,E_{m})$ for the normal bundle of $\gamma \colon I \to M$; then, we extend $(b_{m+k})_{k=1}^{n}$ to an orthonormal parallel frame $(N_{k})_{k=1}^{n}$ for the normal bundle of $M$.

Now, since $\gamma$ is orthogonal to the rulings, developability of $M$ implies that $\overline{D}_{t}N_{k} \cdot E_{2} = \dotsb = \overline{D}_{t}N_{k} \cdot E_{m}=0$ for every $k$. Moreover, as $M$ is assumed to be full, $\overline{D}_{t}N_{k} \cdot E_{1}(t) \neq 0$ for every $k$ and $t$. In conclusion, the chosen frame must satisfy \eqref{D-EQ1} for some $(m+n-1)$-tuple of smooth functions $(f_{1}, \dotsc, f_{m-1}, g_{1}, \dotsc, g_{n})$.

Conversely, for any choice of $(f_{1}, \dotsc, f_{m-1}, g_{1}, \dotsc, g_{n})$, problem \eqref{D-EQ1} has unique global solution, thus defining a full rank-one submanifold up to a rigid motion of $\mathbb{R}^{m+n}$.
\end{proof}

\section{Existence of distributions along \texorpdfstring{$\gamma$}{gamma} with \texorpdfstring{$\Rank \rho =1$}{rank rho = 1}} \label{DevDistr}
%\section{The set of developable submanifolds containing a given curve}

Although in the last section we have presented a method for constructing examples of rank-one submanifolds, such approach gives virtually no control over the curve $\gamma$, which is obtained by integrating the function $E_{1}$.%, and cannot be chosen beforehand.

In contrast, here we aim to give some insight into the following problem.

\begin{problem} \label{EDD-PR1}
Let $\gamma \colon [0,\alpha] = I \to \mathbb{R}^{m+n}$ be a smooth unit-speed curve whose curvature never vanishes. Let $\mathscr{D}_{0}$ be an $m$-dimensional subspace of $T_{\gamma(0)}\mathbb{R}^{m+n}$ such that $\dot{\gamma}(0)\in \mathscr{D}_{0}$ and $\ddot{\gamma}(0) \in \mathscr{D}_{0}^{\perp}$. Find all full rank-one submanifolds $M$ containing $\gamma$ and such that $T_{\gamma(0)}M = \mathscr{D}_{0}$.
\end{problem}

In particular, we will establish, by using a constructive argument, the following existence result.
\begin{theorem}
Assume that $\gamma$ is full. Then Problem \textup{\ref{EDD-PR1}} admits a solution containing $\gamma$ as a geodesic.
%there exists a full developable submanifold $M$, such that $T_{\gamma(0)}M = \mathscr{D}_{0}$, containing $\gamma$ as a geodesic.
\end{theorem}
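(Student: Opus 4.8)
The plan is to reduce Problem \ref{EDD-PR1} to the geometric Cauchy problem of Theorem \ref{IMR-TH3}, and then exhibit a \emph{canonical} choice of distribution $\mathscr{D}$ along $\gamma$ extending $\mathscr{D}_{0}$ for which the rank-one condition on $\rho_{t}$ holds and for which $\gamma$ turns out to be a geodesic of the resulting submanifold. First I would fix a Frenet-type frame along $\gamma$: since $\gamma$ is full (hence, in particular, has non-vanishing curvature), the standard Gram--Schmidt construction on $(\dot{\gamma}, \overline{D}_{t}\dot{\gamma}, \overline{D}_{t}^{2}\dot{\gamma}, \dotsc)$ yields a smooth orthonormal Frenet frame $(F_{1}, \dotsc, F_{m+n})$ with $F_{1} = \dot{\gamma}$ and the usual Frenet relations $\overline{D}_{t}F_{a} = -\kappa_{a-1}F_{a-1} + \kappa_{a}F_{a+1}$, all curvatures $\kappa_{a}$ nowhere zero. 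The hypothesis $\ddot{\gamma}(0) \notin \mathscr{D}_{0}$ together with $\dot{\gamma}(0) \in \mathscr{D}_{0}$ guarantees that $F_{1}(0) \in \mathscr{D}_{0}$ while $F_{2}(0) \notin \mathscr{D}_{0}$, so $\mathscr{D}_{0}$ is transverse to the osculating plane in the right way; this is what will let us set up the construction.

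The heart of the argument is the choice of $\mathscr{D}$. The idea is: along each ruling we want the tangent space to be spanned by $\dot{\gamma}$ together with $m-1$ directions in which the second fundamental form vanishes, and developability plus the requirement that $\gamma$ be a geodesic forces the $\overline{D}_{t}$-derivatives of those ruling directions to be parallel to $\dot{\gamma} = F_{1}$ modulo the distribution. Concretely, I would decompose $\mathscr{D}_{0} = \Span(F_{1}(0)) \oplus W_{0}$ with $W_{0} \perp F_{1}(0)$, $W_{0} \perp F_{2}(0)$ an $(m-1)$-plane, and then \emph{define} $\mathscr{D}_{t}$ by parallel-transporting a suitable frame of $W_{0}$ along $\gamma$ (parallel transport in $\mathbb{R}^{m+n}$) and adjoining $F_{1}(t)$; equivalently, pick the section $N$ of $\mathscr{D}^{\perp}$ one needs so that $\pi^{\top}(\overline{D}_t N) = \kappa_1 F_1$ is non-vanishing and $\Ima\rho_t = \Span(F_1(t))$ is one-dimensional, forcing $\Rank \rho_t = 1$. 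Then Theorem \ref{IMR-TH3} applies and produces a locally unique developable submanifold $M = \gamma + \mathscr{D} \cap (\Ima\rho)^{\perp}$ containing $\gamma$ with $T_{\gamma(0)}M = \mathscr{D}_0$. By construction $\Ima \rho_t = \Span(\dot\gamma(t))$, so the rulings through $\gamma(t)$ lie in $\mathscr{D}_t \cap \dot\gamma(t)^\perp$, i.e. $\gamma$ is orthogonal to every ruling, and moreover $\overline{D}_t\dot\gamma = \overline{D}_t E_1$ has vanishing $\pi^\top$-component along the ruling directions, which is exactly the statement that $\gamma$ is a geodesic of $M$ (its acceleration is normal to $M$). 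Finally, fullness of $\gamma$ forces fullness of $M$: $M$ contains $\gamma$, so any affine subspace containing $M$ contains $\gamma$, hence is all of $\mathbb{R}^{m+n}$.

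The main obstacle I expect is checking that the canonical distribution $\mathscr{D}$ just constructed is genuinely smooth and that $\rho_t$ has rank \emph{exactly} one (not zero) for \emph{all} $t$, not just at $t=0$ — in other words, that the parallel-transported frame never lets $\pi^\perp(\overline{D}_t\dot\gamma)$ or the relevant $\tau$-coefficients degenerate. This is where fullness of $\gamma$ does the real work: non-vanishing of all Frenet curvatures $\kappa_1, \dotsc$ is what keeps $\overline{D}_t\dot\gamma = \kappa_1 F_2$ genuinely normal to $\mathscr{D}$ (so $\pi^\perp(\overline{D}_t\dot\gamma)\neq 0$, the standing hypothesis of Theorem \ref{IMR-TH3}) and keeps $\rho_t$ from collapsing; I would verify this by writing everything in the Frenet frame and tracking the $\tau_i^k$ explicitly. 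A secondary point worth care is the compatibility condition $\dot\gamma(0)\in\mathscr{D}_0$, $\ddot\gamma(0)\notin\mathscr{D}_0$ — I should confirm that this is exactly what is needed for $\mathscr{D}_0$ to be an admissible initial datum (so that the constructed $\mathscr{D}$ indeed contains $\dot\gamma$ and has $\pi^\perp(\overline{D}_t\dot\gamma)(0)\neq 0$), and that it persists for small $t$ by continuity, which combined with closedness of $I$ and the fullness of $\gamma$ upgrades to all of $I$.
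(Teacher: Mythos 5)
Your overall strategy---choose a suitable distribution $\mathscr{D}$ along $\gamma$ extending $\mathscr{D}_{0}$, verify the hypotheses of Theorem \ref{IMR-TH3}, and deduce fullness of $M$ from fullness of $\gamma$---is the right one, and the last step (any affine subspace containing $M$ contains $\gamma$, hence is everything) is correct. The gap is in the one step that carries all the weight: the actual construction of $\mathscr{D}$. Euclidean parallel transport of a fixed complement $W_{0}$ produces $\mathscr{D}_{t} = \Span(\dot{\gamma}(t)) + W_{0}$ with $W_{0}$ a \emph{constant} $(m-1)$-plane, and this choice provably cannot make $\gamma$ a geodesic: a geodesic of $M$ must have $\overline{D}_{t}\dot{\gamma}(t) \perp \mathscr{D}_{t} \supset W_{0}$ for every $t$, which integrates to $\dot{\gamma}(t) \in W_{0}^{\perp}$ for all $t$ and confines $\gamma$ to an affine subspace of dimension $n+1 < m+n$ (recall $m \geq 2$), contradicting the fullness of $\gamma$. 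For the same reason the asserted properties $\pi^{\perp}(\overline{D}_{t}\dot{\gamma}) \neq 0$ and $\Ima \rho_{t} = \Span(\dot{\gamma}(t))$ do not hold for this $\mathscr{D}$, and the fallback ``pick the section $N$ one needs so that $\Ima\rho_{t}$ is one-dimensional'' begs the question, since the rank of $\rho_{t}$ is determined by $\mathscr{D}$, not by a choice of $N$ made afterwards. Two smaller points: $W_{0} = \mathscr{D}_{0} \cap \dot{\gamma}(0)^{\perp}$ is forced, so you cannot additionally impose $W_{0} \perp F_{2}(0)$ (the hypothesis is only $\ddot{\gamma}(0)\notin\mathscr{D}_{0}$); and fullness of $\gamma$ does not by itself yield a global Frenet frame with all curvatures non-vanishing, though only $\kappa \neq 0$ is actually needed.

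The missing idea is that the complementary frame must be transported by an \emph{adapted} connection rather than the flat one. Set $E_{1} = \dot{\gamma}$ and $N_{1} = \overline{D}_{t}E_{1}/\kappa$, and evolve an orthonormal basis $(e_{2},\dotsc,e_{m})$ of $\mathscr{D}_{0} \cap \dot{\gamma}(0)^{\perp}$, together with a complementary normal basis, by the linear system \eqref{EDD-EQ1}, i.e.\ $\overline{D}_{t}V = -\left(V\cdot\overline{D}_{t}N_{1}\right)N_{1}$. This is exactly parallel transport in the hyperplane bundle $N_{1}^{\perp}$; it preserves orthonormality and orthogonality to both $E_{1}$ and $N_{1}$, and makes $\overline{D}_{t}E_{i}$ and $\overline{D}_{t}N_{k}$ proportional to $N_{1}$ for $i,k \geq 2$. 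Consequently $\tau_{i}^{k} = 0$ for all $k \geq 2$, so $\rho_{t}(N_{k}) = 0$ for $k \geq 2$, while $\rho_{t}(N_{1})\cdot E_{1} = -\kappa \neq 0$; hence $\Rank\rho_{t} = 1$ and $\pi^{\perp}(\overline{D}_{t}\dot{\gamma}) = \kappa N_{1} \neq 0$ on all of $I$ (the ODE being linear, the solution is global). Theorem \ref{IMR-TH3} then applies, and $\overline{D}_{t}\dot{\gamma} = \kappa N_{1} \in \mathscr{D}^{\perp}$ says precisely that $\gamma$ is a geodesic of the resulting submanifold.
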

\begin{proof}
%Assume a solution $M$ containing $\gamma$ as a geodesic exists. Let $E_{1}= \dot{\gamma}$ and $N_{1}=\overline{D}_{t}E_{1}/\kappa$, where $\kappa$ is the curvature of $\gamma$. Choose an orthonormal basis $(e_{2},\dotsc,e_{m})$ of the subspace of $\mathscr{D}_{0}$ orthogonal to $E_{1}(0)$. Likewise, choose an orthonormal basis $(n_{2},\dotsc,n_{n})$ of the orthogonal complement of $N_{1}(0)$ in $\mathscr{D}^{\perp}$. Extend $(e_{i},n_{k})_{i,k=2}^{m,n}$ to a smooth orthonormal \emph{parallel} frame $(E_{i},N_{k})_{i,k=2}^{m,n}$ for the bundle defined by the orthogonal complement of the span of $E_{1}$ and $N_{1}$. Then, the following system of equations holds:
Let $E_{1}= \dot{\gamma}$, and let $N_{1}=\overline{D}_{t}E_{1}/\kappa$, where $\kappa$ is the curvature of $\gamma$. Extend $E_{1}(0)=e_{1}$ and $N_{1}(0)=n_{1}$ to orthonormal bases $(e_{1},\dotsc,e_{m})$, $(n_{1},\dotsc,n_{n})$ of $\mathscr{D}_{0}$ and $\mathscr{D}_{0}^{\perp}$, respectively. Then the system of equations
\begin{equation} \label{EDD-EQ1}
\begin{cases}
\overline{D}_{t}E_{2} = - \left(\overline{D}_{t}N_{1} \cdot E_{2}\right)N_{1},\\ 
\mathrel{\makebox[\widthof{=}]{\vdots}}\\
\overline{D}_{t}E_{m} = - \left(\overline{D}_{t}N_{1} \cdot E_{m}\right)N_{1},\\ 
\overline{D}_{t}N_{2} = - \left(\overline{D}_{t}N_{1} \cdot N_{2}\right)N_{1},\\ 
\mathrel{\makebox[\widthof{=}]{\vdots}}\\
\overline{D}_{t}N_{n} = - \left(\overline{D}_{t}N_{1} \cdot N_{n}\right)N_{1},
\end{cases}
\end{equation}
equipped with the initial condition
\begin{equation*}
	\begin{cases}
(E_{i}(0))_{i=2}^{m} = (e_{i})_{i=2}^{m},\\
(N_{k}(0))_{k=2}^{n} = (n_{k})_{k=2}^{n},
\end{cases}
\end{equation*}
defines a \emph{linear} ordinary differential equation problem in the coordinate functions of $(E_{i})_{i=2}^{m}$ and $(N_{k})_{k=2}^{n}$, whose solution exists uniquely on the entire interval $I$; indeed, the $h$th coordinate function $\dot{E}_{2}^{h}$ of $\overline{D}_{t}E_{2}$ with respect to the standard basis of $\mathbb{R}^{m+n}$ satisfies
\begin{equation*}
	\dot{E}_{2}^{h} = -\left(\dot{N}_{1}^{1}E_{2}^{1}+ \dotsb + \dot{N}_{1}^{m+n}E_{2}^{m+n} \right) N_{1}^{h},
\end{equation*}
so that
\begin{equation*}
	\dot{E}_{2}^{h} =
	-\begin{pmatrix}
		\dot{N}_{1}^{1}N_{1}^{h} & \dots & \dot{N}_{1}^{m+n}N_{1}^{h}
	\end{pmatrix}
	\begin{pmatrix}
		E_{2}^{1}\\
		\vdots\\
		E_{2}^{m+n}
	\end{pmatrix},
\end{equation*}
and analogous equations apply to $E_{3}, \dotsc, E_{m},N_{2},\dotsc,N_{n}$.

It is easy to see, from \eqref{EDD-EQ1}, that any of $E_{2}, \dotsc, E_{m},N_{2},\dotsc,N_{n}$ is orthogonal to both $E_{1}$ and $N_{1}$. Thus, writing $\tau_{i}$ and $\mu_{k}$ as shorthands for $\overline{D}_{t}E_{1}\cdot E_{i}$ and $\overline{D}_{t}N_{1}\cdot N_{k}$, respectively, the frame $(E_{i},N_{k})_{i,k=1}^{m,n}$ satisfies the equation
\begin{equation*}%\label{rankOneEquation}
	\begin{pmatrix}
		\overline{D}_{t}E_{1}\\
		\overline{D}_{t}E_{2}\\
		\vdots\\
		\overline{D}_{t}E_{m}\\
		\overline{D}_{t}N_{1}\\
		\overline{D}_{t}N_{2}\\
		\vdots\\
		\overline{D}_{t}N_{n}
	\end{pmatrix}=
	\begin{pmatrix}
		0 & 0 & \dots & 0 & \kappa & 0 & \dots & 0\\
		0 & 0 & \dots & 0 & \tau_{2} & 0 & \dots & 0\\
		\vdots & \vdots & \ddots & \vdots & \vdots & \vdots & \ddots & \vdots\\
		0 & 0 & \dots & 0 & \tau_{m} & 0 & \dots & 0\\
		-\kappa & -\tau_{2} & \dots & -\tau_{m} & 0 & -\mu_{2} & \dots & -\mu_{n}\\
		0 & 0 & \dots & 0 & \mu_{2} & 0 & \dots & 0\\
		\vdots & \vdots & \ddots & \vdots & \vdots & \vdots & \ddots & \vdots\\
		0 & 0 & \dots & 0 & \mu_{n} & 0 & \dots & 0
	\end{pmatrix}
	\begin{pmatrix}
		E_{1}\\
		E_{2}\\
		\vdots\\
		E_{m}\\
		N_{1}\\
		N_{2}\\
		\vdots\\
		N_{n}
	\end{pmatrix},
\end{equation*}
from which we conclude that $(E_{i},N_{k})_{i,k=1}^{m,n}$ defines a rank-one submanifold $M$. If $\gamma$ is full, then so is $M$.
%It is easy to see that each solution vector field is orthogonal to both $E_{1}$ and $N_{1}$. Thus the frame $(E_{i},N_{k})_{i,k=1}^{m,n}$ defines a rank-one submanifold $M$. If $\gamma$ is full, then so is $M$.
\end{proof}

\appendix
\section{Alternative proof of Theorem \ref{IMR-TH3}} \label{AltProof}

In this appendix we present a coordinate-free approach for proving the main part of Theorem \ref{IMR-TH3}, up to and including \textup{\ref{item2}}. We also sketch an alternative method, adapted from \cite[section~5]{markina2019}, for obtaining the parametrized solution given in \ref{item3}.

Let $(X_{j})_{j=1}^{m-1}$ be a smooth, linearly independent $(m-1)$-tuple of vector fields---always tangent to $\mathscr{D}$---along $\gamma$. By Corollary \ref{DS-COR7}, we need to find $(X_{j})_{j=1}^{m-1}$ such that, for any section $Y$ of $\mathscr{D}^{\perp}$ and any $j = 1, \dotsc, m-1$,
\begin{equation}
X_{j} \cdot \overline{D}_{t}Y \equiv X_{j} \cdot \pi^{\top}(\overline{D}_{t}Y) \equiv X_{j} \cdot \rho(Y)=0.
\end{equation}

Hence, our problem amounts to finding $ \Sigma=\Span(X_{j})_{j=1}^{m-1}$, satisfying $\dot{\gamma}(t) \notin \Sigma_{t}$ for every $t$ and such that
\begin{equation} \label{APP-EQ11}
\Sigma \subset (\Ima \rho)^{\perp} \cap \mathscr{D}.
\end{equation}
Here by $(\Ima \rho)^{\perp}$ we mean the distribution $(\Ima \rho_{t})^{\perp}_{t \in I}$, where the superscript $^{\perp}$ denotes orthogonal complement in the ambient tangent space.

Assume that $\pi^{\perp}(\overline{D}_{t}\dot{\gamma})(t) \neq 0$ for all $t\in I$. Then there exists a smooth section $N$ of $\mathscr{D}^{\perp}$ such that $\dot{\gamma} \cdot \overline{D}_{t}N = \dot{\gamma} \cdot \pi^{\top}(\overline{D}_{t}N)$ never vanishes. It follows that, for any $t$, $\Rank \rho_{t} \neq 0$ and $\dot{\gamma}(t) \notin (\Ima \rho_{t})^{\perp}$. Since the dimension of the intersection in \eqref{APP-EQ11} equals $m - \Rank \rho_{t}$, it is clear that a solution $\Sigma$ exists if and only if $\Rank \rho =1$, and that such solution is given by equality in \eqref{APP-EQ11}.

As for \ref{item3}, pick an orientation on $\mathscr{D}$. Associated to such a choice (and the natural bundle metric) there is a well-defined Hodge star operator $\star$ on $\mathscr{D}$, which in turn defines a unique $(m-1)$-fold vector cross product on $\mathscr{D}$; see \cite[section~3]{brown1967}. This product acts on tuples of vector fields $X_{1}, \dotsc, X_{m-1}$ \emph{on} $\mathscr{D}$ by
\begin{equation*}
X_{1} \times \dotsb \times X_{m-1} = \star(X_{1} \wedge \dotsb \wedge X_{m-1}).
\end{equation*}

Let $N$ as above. For every $j \in \{1,\dotsc, m-1\}$, let
\begin{equation} \label{APP-EQ12}
X_{j}(N) = \overline{D}_{t}N \times E_{2} \times \dotsb \times\widehat{E_{j+1}} \times\dotsb \times E_{m},
\end{equation}
where the hat indicates that $E_{j+1}$ is omitted, so that the cross product is $(m-1)$-fold for every $j$. Since $\overline{D}_{t}N$ is never in the span of $E_{2}, \dotsc, E_{m}$, it follows that $(X_{1}(N), \dotsc, X_{m-1}(N))$ is linearly independent, i.e., $\Span (X_{j}(N))_{j=1}^{m-1} = \overline{D}_{t}N^{\perp}$. By computing the coordinates of the cross product in \eqref{APP-EQ12} with respect to the frame $(E_{1},\dotsc,E_{m})$, the desired expression of $X_{j}$ is easily obtained. 

\section{Alternative construction for \texorpdfstring{$n=1$}{n = 1}} \label{AltConstr}

For completeness, in this appendix we briefly discuss an alternative method to construct the solution. Such method is only available when the codimension is one.

Let $n=1$, and let $N$ be a continuous section of $\mathscr{D}^{\perp}$ such that $N \cdot N =1$. This section is automatically smooth; it is unique up to a sign. Assuming existence, the solution of the Cauchy problem for developable hypersurfaces is given by the distribution $\overline{D}_{t}N^{\perp} \cap N^{\perp}$. 

Note that if we identify (through parallel translation in $\mathbb{R}^{m+1}$) the vector field $N$ with a curve in the unit sphere $\mathbb{S}^{m}$, then
\begin{equation*}
\overline{D}_{t}N^{\perp} \cap N^{\perp} \rvert_{t} \equiv \dot{N}(t)^{\perp} \cap T_{N(t)}\mathbb{S}^{m}.
\end{equation*}
Hence we may parametrize the solution using any smooth frame for the normal space of $N \colon I \to \mathbb{S}^{m}$.%This is the classical Gauss parametrization.

\section*{Acknowledgments}
The author thanks David Brander, Ruy Tojeiro, and the anonymous referees for helpful suggestions.
An earlier version of this article was included in the author's Ph.D.\ thesis, written at the Technical University of Denmark under the supervision of Steen Markvorsen and the cosupervision of Jakob Bohr. The work was completed at the Centre for Mathematics of the University of Coimbra, to which the author was affiliated at the time of submission.

\bibliographystyle{amsplain}
\bibliography{../biblio/biblio}

\providecommand{\bysame}{\leavevmode\hbox to3em{\hrulefill}\thinspace}
\providecommand{\MR}{\relax\ifhmode\unskip\space\fi MR }
% \MRhref is called by the amsart/book/proc definition of \MR.
\providecommand{\MRhref}[2]{%
  \href{http://www.ams.org/mathscinet-getitem?mr=#1}{#2}
}
\providecommand{\href}[2]{#2}
\begin{thebibliography}{10}

\bibitem{aledo2009}
Juan~A. Aledo, Jos\'{e}~A. G\'{a}lvez, and Pablo Mira, \emph{A {D}'{A}lembert
  formula for flat surfaces in the 3-sphere}, J. Geom. Anal. \textbf{19}
  (2009), no.~2, 211--232.

\bibitem{berndt2016}
J\"{u}rgen Berndt, Sergio Console, and Carlos~Enrique Olmos, \emph{Submanifolds
  and holonomy}, second ed., Monographs and Research Notes in Mathematics, CRC
  Press, Boca Raton, 2016.

\bibitem{brander2013}
David Brander and Martin Svensson, \emph{The geometric {C}auchy problem for
  surfaces with {L}orentzian harmonic {G}auss maps}, J. Differential Geom.
  \textbf{93} (2013), no.~1, 37--66.

\bibitem{brander2018}
David Brander and Peng Wang, \emph{On the {B}j{\"{o}}rling problem for
  {W}illmore surfaces}, J. Differential Geom. \textbf{108} (2018), no.~3,
  411--457.

\bibitem{brown1967}
Robert~B. Brown and Alfred Gray, \emph{Vector cross products}, Comment. Math.
  Helv. \textbf{42} (1967), 222--236.

\bibitem{cintra2016}
Adriana~A. Cintra, Francesco Mercuri, and Irene~I. Onnis, \emph{The
  {B}j{\"{o}}rling problem for minimal surfaces in a {L}orentzian
  three-dimensional {L}ie group}, Ann. Mat. Pura Appl. (4) \textbf{195} (2016),
  no.~1, 95--110.

\bibitem{dajczer1985}
Marcos Dajczer and Detlef Gromoll, \emph{Gauss parametrizations and rigidity
  aspects of submanifolds}, J. Differential Geom. \textbf{22} (1985), no.~1,
  1--12.

\bibitem{docarmo1976}
Manfredo~P. do~Carmo, \emph{Differential geometry of curves and surfaces},
  Prentice-Hall, Englewood Cliffs, 1976.

\bibitem{erbacher1971}
Joseph Erbacher, \emph{Reduction of the codimension of an isometric immersion},
  J. Differential Geom. \textbf{5} (1971), 333--340.

\bibitem{hartman1965}
Philip Hartman, \emph{On isometric immersions in {E}uclidean space of manifolds
  with non-negative sectional curvatures}, Trans. Amer. Math. Soc. \textbf{115}
  (1965), 94--109.

\bibitem{lee2013}
John~M. Lee, \emph{Introduction to smooth manifolds}, second ed., Graduate
  Texts in Mathematics, no. 218, Springer, New York, 2013.

\bibitem{markina2019}
Irina Markina and Matteo Raffaelli, \emph{Flat approximations of hypersurfaces
  along curves}, Manuscripta Math. \textbf{160} (2019), no.~3-4, 315--325.

\bibitem{martinez2015}
Antonio Mart\'{i}nez and Francisco Mil\'{a}n, \emph{The geometric {C}auchy
  problem for the hyperbolic {H}essian one equation}, Nonlinear Anal.
  \textbf{125} (2015), 323--333.

\bibitem{tu2011}
Loring~W. Tu, \emph{An introduction to manifolds}, second ed., Universitext,
  Springer, New York, 2011.

\bibitem{ushakov1999}
Vitaly Ushakov, \emph{Developable surfaces in {E}uclidean space}, J. Austral.
  Math. Soc. Ser. A \textbf{66} (1999), no.~3, 388--402.

\bibitem{yano1944}
Kentaro Yano, \emph{On the torse-forming directions in {R}iemannian spaces},
  Proc. Imp. Acad. Tokyo \textbf{20} (1944), 340--345.

\end{thebibliography}
\end{document}